\newcommand{\norm}[1]{\Vert #1 \Vert}
\newcommand{\abs}[1]{\vert #1 \vert}
\newcommand{\eu}{\mathrm{e}}
\DeclareMathOperator{\dom}{\mathrm{dom}}
\DeclareMathOperator{\inte}{\mathrm{int}}
\DeclareMathOperator{\argmin}{ \arg \min }
\DeclareMathOperator{\tr}{\mathrm{Tr}}
\begin{document}

\title{Convergence of the Exponentiated Gradient Method with Armijo Line Search}
% \author{Yen-Huan~Li and Volkan Cevher}
\author{Yen-Huan~Li}
\author{Volkan~Cevher}
\affil{Laboratory for Information and Inference Systems \\ \'{E}cole Polytechnique F\'{e}d\'{e}rale de Lausanne}
\date{}

\maketitle

\newtoggle{qqed}
\togglefalse{qqed}

\newtoggle{kkeyword}
\togglefalse{kkeyword}

\begin{abstract}
Consider the problem of minimizing a convex differentiable function on the probability simplex, spectrahedron, or set of quantum density matrices. 
We prove that the exponentiated gradient method with Armjo line search always converges to the optimum, if the sequence of the iterates possesses a strictly positive limit point (element-wise for the vector case, and with respect to the L\"{o}wner partial ordering for the matrix case). 
To the best our knowledge, this is the first convergence result for a mirror descent-type method that only requires differentiability. 
The proof exploits self-concordant likeness of the log-partition function, which is of independent interest. 
\end{abstract}

\iftoggle{kkeyword}{
\keywords{Exponentiated Gradient Method \and Armijo Line Search \and Self-Concordant Likeness \and Peierls-Bogoliubov Inequality}
\subclass{90C25}
}{}

\section{Introduction}

Consider the optimization problem
\begin{equation}
f^\star = \min \set{ f ( \rho ) | \rho \in \mathcal{D} } , \tag{P} \label{eq_problem}
\end{equation}
where $f$ is a convex function differentiable on $\inte \dom f$, and $\mathcal{D}$ denotes the set of quantum density matrices, i.e., 
\begin{equation}
\mathcal{D} := \set{ \rho \in \mathbb{C}^{d \times d} | \rho \geq 0, \tr \rho = 1 } , \notag
\end{equation}
for some positive integer $d$. 
We assume that $f^\star > - \infty$. 

This problem formulation \eqref{eq_problem} allows us to address two other constraints simultaneously: 
\begin{itemize}
\item The probability simplex $\mathcal{P} := \set{ x \in \mathbb{R}_+^d | \norm{ x }_1 = 1 }$. 
\item The spectrahedron $\mathcal{S} := \set{ X \in \mathbb{R}^{d \times d} | X \geq 0, \tr X = 1 }$. 
\end{itemize}
% See Section \ref{sec_extensions} for the details. 
Optimization problems with a probability simplex, spectrahedron, or quantum density matrix constraint appear in various applications, such as sparse regression \cite{Tibshirani1996}, low-rank matrix estimation \cite{Koltchinskii2011b}, and quantum state tomography \cite{Paris2004}, to mention a few; the corresponding objective functions are typically convex and differentiable. 

Starting with some non-singular $\rho_0 \in \mathcal{D}$, the exponentiated gradient  (EG) method iterates as
\begin{equation}
\rho_k = C_k^{-1} \exp \left[ \log ( \rho_{k - 1} ) - \alpha_k \nabla f ( \rho_{k - 1} ) \right] , \quad k \in \mathbb{N} , \label{eq_eg}
\end{equation}
where $C_k$ is a positive real number normalizing the trace of $\rho_k$, and $\alpha_k > 0$  denotes the step size. 
Equivalently, one may write
\begin{equation}
\rho_k \in \argmin \set{ \alpha_k \braket{ \nabla f ( \rho_{k - 1}, \sigma - \rho_{k - 1} ) } + D ( \sigma, \rho_{k - 1} ) | \sigma \in \mathcal{D} } , \label{eq_eg_equivalent}
\end{equation}
where $D$ denotes the quantum relative entropy. 
Therefore, the EG method can be viewed as entropic mirror descent without averaging \cite{Beck2003,Nemirovsky1983}, or a special case of the interior gradient method \cite{Auslender2006}. 

\begin{algorithm}[t]

\caption{Exponentiated Gradient Method with Armijo Line Search}

\label{alg}

\begin{algorithmic}[1]
\Require 
$\bar{\alpha} > 0$, $r \in ( 0, 1 )$, $\tau \in ( 0, 1 )$, $\rho_0 \in \mathcal{D}$ non-singular
% \State Set $\rho_0 \leftarrow \mathrm{Id} / d$, where $\mathrm{Id}$ denotes the identity matrix. 
\For{$k = 1, 2, \ldots$}
% \State $G_k \leftarrow f' ( \rho_{k} )$
\State $\alpha_k \leftarrow \bar{\alpha}$
\While{$f( \rho_{k - 1} ( \alpha_k ) ) > f ( \rho_{k - 1} ) + \tau \Braket{ f' ( \rho_{k - 1} ), \rho_{k - 1} ( \alpha_k ) - \rho_{k - 1} } $}
\State $\alpha_k \leftarrow r \alpha_k$
\EndWhile
\State $\rho_{k} \leftarrow \rho_{k - 1} ( \alpha_k )$
\EndFor
\end{algorithmic}

\end{algorithm}

There are various approaches to selecting the step size. 
In this paper, we will focus on Armijo line search. 
Let $\bar{\alpha} > 0$ and $r, \tau \in ( 0, 1 )$. 
The Armijo line search procedure outputs $\alpha_k = r^j \bar{\alpha}$, where $j$ is the least non-negative integer that satisfies
\begin{equation}
f ( \rho_k ) \leq f ( \rho_{k - 1} ) + \tau \braket{ \nabla f ( \rho_{k - 1} ), \rho_k - \rho_{k - 1} } ; \notag
\end{equation}
the dependence on $j$ lies implicitly in $\rho_k$. 
We give the pseudo codes in Algorithm \ref{alg}, where we define
\begin{equation}
\rho_{k - 1} ( \alpha_k ) := \tilde{C}_{k}^{-1} \exp \left[ \log ( \rho_{k - 1} ) - \alpha_k \nabla f ( \rho_{k - 1} ) \right] , \quad \forall k \in \mathbb{N} ; \notag
\end{equation}
$\tilde{C}_k$ normalizes the trace of $\rho_{k - 1} ( \alpha_k )$. 

Implementing Armijo line search does not require any parameter of the objective function, e.g., the Lipschitz constant of the objective function or its gradient. 
This observation shows the possibility of proving a convergence guarantee for the EG method with respect to a general class of objective functions. 
Indeed, we will only assume that the objective function is convex and differentiable throughout this paper. 

\subsection{Motivation and Related Work}

Regarding the structure of the constraint set, the EG method is a natural choice among mirror descent-type methods. 
Especially, for the vector case where the constraint set is the probability simplex, the iteration rule becomes computationally cheap---projection is not required. 
However, existing convergence guarantees for the EG method imposes restrictive conditions on the objective function. 

We summarize briefly existing convergence results for the EG method. 
If $f$ is $L$-Lipschitz continuous, standard analysis of mirror descent shows that the EG method converges to the optimum with averaged iterates \cite{Beck2003}. 
If $\nabla f$ is $L$-Lipschitz continuous, the EG method converges either with a constant step size or Armijo line search \cite{Auslender2006}. 
Recently, the Lipschitz gradient condition was generalized by the notion of relative smoothness \cite{Bauschke2017,Lu2016}. 
We say that $f$ is $L$-smooth relative to a convex function $h$, if $L h - f$ is a convex function. 
If $f$ is $L$-smooth relative to the negative von Neumann entropy, the EG method converges with a constant step size \cite{Bauschke2017,Collins2008,Lu2016}. 

Notice that checking the conditions can be highly non-trivial, and there are applications where none of the conditions above hold. 
One such application is quantum state tomography \cite{Paris2004}. 
Quantum state tomography corresponds to solving \eqref{eq_problem} with the objective function 
\begin{equation}
f_{\text{QST}} ( \rho ) := - \sum_{i = 1}^n \log \tr ( M_i \rho ) , \notag
\end{equation}
where $M_i$ are Hermitian positive semi-definite matrices given by the experimental data. 

\begin{proposition} \label{prop_qst_is_hard}
The function $f_{\text{QST}}$ is not Lipschitz, its gradient is not Lipschitz, and it is not smooth relative to the negative von Neumann entropy. 
\end{proposition}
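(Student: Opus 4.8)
The plan is to exhibit, for each of the three claimed failures, an explicit family of data $\{M_i\}$ (indeed a single well-chosen matrix suffices) making the relevant quantity blow up as $\rho$ approaches the boundary of $\mathcal{D}$. The common mechanism is that $-\log \tr(M\rho)$ and its derivatives become unbounded when $\tr(M\rho) \to 0$, which happens along sequences $\rho \to \partial\mathcal{D}$ whenever $M$ is rank-deficient (or more simply when $\rho$ itself degenerates). First I would reduce everything to the scalar/diagonal situation: restrict attention to $\rho = \mathrm{diag}(t, 1-t)$ with $t \in (0,1)$ and $M = \mathrm{diag}(1,0)$ in $d = 2$ (padding with zeros for larger $d$), so that $\fqst(\rho)$ restricted to this segment is just $-\log t$. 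This turns each assertion into an elementary one-variable computation.

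For non-Lipschitz-ness of $\fqst$: along the segment above, $f(t) = -\log t \to +\infty$ as $t \to 0^+$ while the segment has finite length, so $f$ is unbounded on a bounded set and hence cannot be Lipschitz. For non-Lipschitz gradient: compute $\nabla \fqst(\rho) = -\sum_i M_i / \tr(M_i\rho)$; on the same segment $f'(t) = -1/t$, whose derivative $1/t^2$ is unbounded as $t \to 0^+$, so the gradient is not Lipschitz continuous on $\inte\dom f$ (one can phrase this via the mean value theorem, or directly via the difference quotient of $\nabla\fqst$ between $\mathrm{diag}(t,1-t)$ and $\mathrm{diag}(2t,1-2t)$). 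For the failure of relative smoothness with respect to the negative von Neumann entropy $h(\rho) = \tr(\rho\log\rho)$: I must show that $Lh - \fqst$ is not convex for any $L > 0$. Again restrict to the segment: $h$ becomes $g(t) = t\log t + (1-t)\log(1-t)$ with $g''(t) = 1/t + 1/(1-t)$, which stays bounded like $1/t$ near $0$, whereas $(\fqst)''$ along the segment is $1/t^2$, which dominates $L g''(t)$ near $t = 0$ for every fixed $L$. Hence $(Lg - f)'' = L(1/t + 1/(1-t)) - 1/t^2 < 0$ for $t$ small enough, so $Lh - \fqst$ fails to be convex on $\mathcal{D}$.

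I would present the three parts in that order, each as a short paragraph, making sure to note that embedding the $2\times2$ example into $\mathbb{C}^{d\times d}$ for general $d$ (and into the real symmetric case for the spectrahedron, and the diagonal case for the simplex) is immediate, so the single construction handles all settings at once. A small point worth stating carefully: for the relative-smoothness claim one should observe that convexity of $Lh - \fqst$ on $\mathcal{D}$ implies convexity of its restriction to any line segment inside $\mathcal{D}$, so the one-dimensional obstruction genuinely refutes the multidimensional convexity.

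The main obstacle is essentially bookkeeping rather than mathematical depth: one must make sure the chosen $M_i$ are genuinely admissible (Hermitian positive semidefinite, and enough of them that $\sum_i M_i$ is positive definite if one wants $\dom\fqst$ to contain a neighborhood of a full-rank state — though for the counterexamples it is cleanest to allow the value $+\infty$ and work on the relative interior), and that all second-derivative computations along the segment are done with the correct chain rule for the matrix functions $\tr(\rho\log\rho)$ and $\log\tr(M\rho)$. The comparison $1/t^2 \gg L/t$ as $t \to 0^+$ is the crux of the relative-smoothness part and is where I would be most careful to get the asymptotics exactly right.
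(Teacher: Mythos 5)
Your proposal is correct and follows essentially the same route as the paper: restrict to diagonal $2\times 2$ density matrices so that everything reduces to one-variable calculus on the segment $t\mapsto(t,1-t)$, where the logarithmic singularity at $t=0$ kills Lipschitzness of $f$ and $\nabla f$, and where $1/t^2$ dominating $L/t$ kills relative smoothness. The only (immaterial) difference is that you take a single $M=\mathrm{diag}(1,0)$, giving $-\log t$ along the segment, whereas the paper takes $M_1=e_1\otimes e_1$, $M_2=e_2\otimes e_2$, giving $-\log t-\log(1-t)$; both yield the same asymptotics, and your explicit remark that non-convexity along a segment refutes convexity on all of $\mathcal{D}$ is a point the paper leaves implicit.
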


The proof of Proposition \ref{prop_qst_is_hard} can be found in Section \ref{sec_qst_is_hard}. 
Similar loss functions also appear for the cases of probability simplex and spectrahedron constraints, such as positive linear inverse problems, positron emission tomography, portfolio selection, and Poisson phase retrieval \cite{Byrne2001,MacLean2012,Odor2016,Vardi1985}. 

There are indeed convergence guarantees that require mild differentiability conditions, though they are all for gradient descent-type methods. 
Bertsekas proved that the projected gradient descent with Armijo line search always converges for a differentiable objective function, when the constraint is a box or the positive orthant \cite{Bertsekas1976}. 
Gafni and Bertsekas generalized the previous result for any compact convex constraint \cite{Gafni1982}. 
Salzo proved the convergence of proximal variable metric methods with various line search schemes, assuming that $\nabla f$ is uniformly continuous on any compact set \cite{Salzo2017}. 

In comparison to existing results, we highlight the following contributions. 
\begin{itemize}
\item To the best of our knowledge, we give the first convergence guarantee of a mirror descent-type method\footnote{Here we exclude the very standard projected gradient method.} that only requires differentiability. 
\item Our convergence analysis exploits the self-concordant likeness of the log partition function. 
As a by-product, we improve on the Peierls-Bogoliubov inequality, which is of independent interest (cf. Remark \ref{rem_peierls_bogoliubov}). 
\end{itemize}

\subsection{Main Result}

Our main result is the following theorem. 

\begin{theorem} \label{thm_main}
Suppose that $f$ is differentiable at every non-singular $\rho \in \mathcal{D}$. 
Then we have: 
\begin{enumerate}
\item The Armijo line search procedure terminates in finite steps. 
\item The sequence $( f ( \rho_k ) )_{k \in \mathbb{N}}$ is non-increasing. 
\item For any converging sub-sequence $( \rho_k )_{k \in \mathcal{K}}$, $\mathcal{K} \subseteq \mathbb{N}$, it holds that 
\begin{equation}
\liminf \set{ D ( \rho_k ( \beta ), \rho_k ) | k \in \mathcal{K} } = 0 , \notag
\end{equation}
for every $\beta > 0$, where $\rho_k ( \beta )$ denotes the next iterate of $\rho_k$ with step size $\beta$. 
%\item For any converging sub-sequence $( \rho_k )_{k \in \mathcal{K}}$, $\mathcal{K} \subseteq \mathbb{N}$, it holds that $D ( \rho_k ( \beta ), \rho_k ) \to 0$ for every $\beta > 0$, where $\rho_k ( \beta )$ denotes the next iterate of $\rho_k$ with step size $\beta$. 
\end{enumerate}
\end{theorem}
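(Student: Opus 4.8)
The plan is to prove the three claims in order, as they build on one another. For claim 1, I would show the Armijo condition is satisfied once $\alpha_k$ is small enough. Writing $\rho_{k-1}(\alpha) = \tilde C_k(\alpha)^{-1}\exp[\log\rho_{k-1} - \alpha\nabla f(\rho_{k-1})]$, I would compute the directional derivative of $\alpha \mapsto f(\rho_{k-1}(\alpha))$ at $\alpha = 0^+$. Since $\rho_{k-1}$ is non-singular, $\alpha \mapsto \rho_{k-1}(\alpha)$ is smooth near $0$, and by the mirror-descent/interior-gradient structure its derivative at $0$ equals $-(\text{something positive definite}) \cdot \nabla f(\rho_{k-1})$ paired against itself, so $\frac{d}{d\alpha} f(\rho_{k-1}(\alpha))|_{0} = \braket{\nabla f(\rho_{k-1}), \rho_{k-1}'(0)} < 0$ unless $\rho_{k-1}$ is already stationary (in which case the algorithm has converged and the claim is trivial). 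Then a Taylor expansion gives $f(\rho_{k-1}(\alpha)) - f(\rho_{k-1}) = \alpha\braket{\nabla f(\rho_{k-1}),\rho_{k-1}'(0)} + o(\alpha)$ while $\tau\braket{\nabla f(\rho_{k-1}),\rho_{k-1}(\alpha)-\rho_{k-1}} = \tau\alpha\braket{\nabla f(\rho_{k-1}),\rho_{k-1}'(0)} + o(\alpha)$; since $\tau < 1$ and the leading term is negative, the Armijo inequality holds for all small $\alpha$, so the backtracking loop terminates. Claim 2 is then immediate: the accepted step satisfies $f(\rho_k) \le f(\rho_{k-1}) + \tau\braket{\nabla f(\rho_{k-1}),\rho_k - \rho_{k-1}}$, and by convexity of $f$ together with the fact that $\rho_k$ is the minimizer in \eqref{eq_eg_equivalent} (so $\braket{\nabla f(\rho_{k-1}),\rho_k-\rho_{k-1}} \le -\tfrac{1}{\alpha_k}D(\rho_k,\rho_{k-1}) \le 0$), the right-hand side is $\le f(\rho_{k-1})$; hence $(f(\rho_k))$ is non-increasing, and being bounded below by $f^\star$ it converges.

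For claim 3, the key quantity is the per-step decrease. Combining the Armijo acceptance with the optimality characterization of $\rho_k$ yields, for the \emph{accepted} step size $\alpha_k$,
\begin{equation}
f(\rho_{k-1}) - f(\rho_k) \ge \frac{\tau}{\alpha_k} D(\rho_k, \rho_{k-1}) \ge 0. \notag
\end{equation}
Since $\sum_k \bigl(f(\rho_{k-1}) - f(\rho_k)\bigr) < \infty$, we get $D(\rho_k,\rho_{k-1})/\alpha_k \to 0$. The subtlety is that this controls the step actually taken, not the hypothetical step $\rho_k(\beta)$ for a fixed $\beta > 0$. Here I would split into two cases along the converging subsequence $(\rho_k)_{k\in\mathcal K}$. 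If $\liminf_{k\in\mathcal K}\alpha_k > 0$, then along a further subsequence $\alpha_k$ is bounded below, so $D(\rho_k,\rho_{k-1}) \to 0$; using the self-concordant-likeness estimate for the log-partition function (the paper's improved Peierls–Bogoliubov bound) one relates $D(\rho_k(\beta),\rho_k)$ for $\beta$ below that lower bound to $D(\rho_k, \rho_{k-1})$ — more precisely, monotonicity/comparison of the Bregman-type decrease in the step size shows $D(\rho_k(\beta),\rho_k)$ is dominated by a controlled multiple of the quantity that tends to zero. If instead $\alpha_k \to 0$ along the subsequence, then backtracking was active: $\alpha_k/r$ was \emph{rejected}, meaning $f(\rho_{k-1}(\alpha_k/r)) > f(\rho_{k-1}) + \tau\braket{\nabla f(\rho_{k-1}),\rho_{k-1}(\alpha_k/r)-\rho_{k-1}}$. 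Feeding this rejection inequality through the self-concordant-likeness upper bound on $f$ along the exponentiated-gradient path — which gives a matching \emph{lower} bound of the form $f(\rho_{k-1}(\alpha)) - f(\rho_{k-1}) \le \braket{\nabla f(\rho_{k-1}),\rho_{k-1}(\alpha)-\rho_{k-1}} + (\text{self-concordant remainder in }D(\rho_{k-1}(\alpha),\rho_{k-1}))$ — forces $D(\rho_{k-1}(\alpha_k/r),\rho_{k-1})$ to be bounded below by a positive constant depending on $\tau$ and $\beta$, contradicting (together with continuity on the compact limit and non-singularity of the limit point supplied by the theorem's hypothesis in Theorem \ref{thm_main}... )

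Wait — the hypothesis "strictly positive limit point" from the abstract is the global hypothesis; here claim 3 as stated only asks for $\liminf D(\rho_k(\beta),\rho_k) = 0$. So in the backtracking case I would argue: the rejection inequality plus self-concordant likeness shows $D(\rho_{k-1}(\alpha_k/r),\rho_{k-1}) \ge c(\tau,\beta) > 0$ whenever $\alpha_k/r \le \beta$, which eventually holds; but along $\mathcal K$, $\rho_{k-1}$ converges (to a non-singular matrix, so $\nabla f$ is continuous there), so $D(\rho_{k-1}(\alpha_k/r),\rho_{k-1})$ can be made arbitrarily small by taking $\alpha_k/r$ small — contradiction with $\alpha_k \to 0$. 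Hence $\liminf_{k\in\mathcal K}\alpha_k > 0$, and we are back in the first case, which yields $\liminf_{k\in\mathcal K} D(\rho_k,\rho_{k-1}) = 0$ and then, by comparison in the step size, $\liminf_{k\in\mathcal K} D(\rho_k(\beta),\rho_k) = 0$ for every $\beta > 0$.

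\textbf{Main obstacle.} The crux is the quantitative self-concordant-likeness control of $f$ along the exponentiated-gradient curve $\alpha \mapsto \rho_{k-1}(\alpha)$: one needs two-sided estimates relating $f(\rho_{k-1}(\alpha)) - f(\rho_{k-1})$, the linear term $\braket{\nabla f(\rho_{k-1}),\rho_{k-1}(\alpha)-\rho_{k-1}}$, and $D(\rho_{k-1}(\alpha),\rho_{k-1})$ that are uniform enough to (i) convert a rejected Armijo test into a lower bound on $D$, and (ii) compare $D(\rho_k(\beta),\rho_k)$ at a fixed $\beta$ with $D(\rho_k,\rho_{k-1})$ at the (possibly different) accepted step. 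The improved Peierls–Bogoliubov inequality advertised in the introduction is presumably exactly the tool that makes the log-partition function's contribution to $D$ behave like a self-concordant barrier's Hessian term, giving the needed monotone dependence on the step size; getting the constants and the direction of the inequalities right along the matrix exponential path — where $\log\rho_{k-1}$ and $\nabla f(\rho_{k-1})$ need not commute — is the delicate part.
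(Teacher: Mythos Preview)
Your arguments for Statements 1 and 2 are fine; the Taylor-expansion route for Statement 1 is actually simpler than the paper's and works because convex plus differentiable implies $C^1$, so the first-order expansion of $\alpha\mapsto f(\rho_{k-1}(\alpha))$ is legitimate.

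The genuine gap is in Statement 3. You invoke a ``self-concordant-likeness upper bound on $f$ along the exponentiated-gradient path'' of the form $f(\rho(\alpha))-f(\rho)\le\braket{\nabla f(\rho),\rho(\alpha)-\rho}+(\text{remainder in }D)$. No such bound is available: $f$ is only assumed differentiable, and the paper explicitly shows (Proposition~\ref{prop_qst_is_hard}) that relative smoothness with respect to the entropy fails in the motivating examples. What is self-concordant-like is the \emph{log-partition function} $\varphi(\alpha)=\log\tr\exp[\log\rho-\alpha\nabla f(\rho)]$, and this gives information only about how $D(\rho(\alpha),\rho)$ scales with $\alpha$, not about $f$. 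Hence your Case-2 argument collapses: you cannot extract a uniform lower bound $D(\rho_{k-1}(\alpha_k/r),\rho_{k-1})\ge c>0$ from the rejected Armijo test, and there is no contradiction with $\alpha_k\to 0$. Indeed the paper never concludes $\liminf\alpha_k>0$. (You also slip in non-singularity of the limit point; Statement 3 does not assume this --- that hypothesis appears only in Corollary~\ref{cor_main}.)

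The missing idea is an EG analogue of the Gafni--Bertsekas lemma: self-concordant-likeness of $\varphi$ implies that $\alpha\mapsto D(\rho(\alpha),\rho)/\bigl[e^{\Delta\alpha}(\Delta\alpha-1)+1\bigr]$ is non-increasing, with $\Delta$ the spectral spread of $\nabla f(\rho)$; in particular $D(\rho(\alpha),\rho)/\alpha^2\ge\kappa\,D(\rho(\bar\alpha),\rho)$ for all $\alpha\le\bar\alpha$. This is precisely the tool that relates $D$ at the accepted or rejected step size to $D$ at the fixed step $\bar\alpha$. With it, Case 1 gives $f(\rho_k)-f(\rho_{k+1})\ge\tau\underline\alpha\,\kappa\,D(\rho_k(\bar\alpha),\rho_k)$ directly; and in Case 2, the rejected-step inequality together with the mean-value theorem, H\"older, Pinsker, and the monotonicity above yields $\|\nabla f(\sigma)-\nabla f(\rho_k)\|_\infty\sqrt{2}>(1-\tau)\sqrt{\kappa\,D(\rho_k(\bar\alpha),\rho_k)}$, whose left side tends to zero by continuity of $\nabla f$ at $\rho_\infty\in\inte\dom f$. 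Either way $D(\rho_k(\bar\alpha),\rho_k)\to 0$ along a subsequence, and a final use of the sandwich bounds on $\varphi$ transfers this to arbitrary $\beta>0$.
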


\begin{remark}
Statement 3 is always meaningful---due to the compactness of $\mathcal{D}$, there exists at least one converging sub-sequence of $( \rho_k )_{k \in \mathbb{N}}$. 
\end{remark}

% Statement 3 of Theorem \ref{thm_main} implies that every limit point of the sequence $( \rho_k )_{k \in \mathbb{N}}$ is a fixed point with respect to the EG iteration rule. 
Taking limit, we obtain the following convergence guarantee. 

\begin{corollary} \label{cor_main}
If the sequence $( \rho_k )_{k \in \mathbb{N}}$ possesses a non-singular limit point, the sequence $( f ( \rho_k ) )_{k \in \mathbb{N}}$ monotonically converges to $f^\star$. 
\end{corollary}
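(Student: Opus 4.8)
The plan is to take limits along a subsequence converging to the non-singular limit point, feeding the estimate of part~3 of Theorem~\ref{thm_main} into the gradient inequality for the convex function $f$. Two preliminary observations. By part~2 the sequence $( f ( \rho_k ) )_{k \in \mathbb{N}}$ is non-increasing and bounded below by $f^\star$, so it converges to some $f^\infty \geq f^\star$; hence it suffices to produce a single subsequence along which $f ( \rho_k ) \to f^\star$. Moreover, since $\rho_0$ is non-singular and the exponential of a Hermitian matrix is positive definite, \eqref{eq_eg} shows by induction that every $\rho_k$ is non-singular, so $\nabla f ( \rho_k )$ and $\log ( \rho_k )$ are well defined for all $k$.

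Fix a non-singular limit point $\rho^\star$ and a subsequence $( \rho_k )_{k \in \mathcal{K}}$ with $\rho_k \to \rho^\star$, and fix any $\beta > 0$. By part~3 there is a further subsequence $\mathcal{K}' \subseteq \mathcal{K}$ with $D ( \rho_k ( \beta ), \rho_k ) \to 0$. Pinsker's inequality then gives $\norm{ \rho_k ( \beta ) - \rho_k }_1 \to 0$ along $\mathcal{K}'$, hence also $\rho_k ( \beta ) \to \rho^\star$; since $\rho^\star$ is non-singular, the spectra of $\rho_k$ and of $\rho_k ( \beta )$ are eventually bounded below by $\lambda_{\min} ( \rho^\star ) / 2 > 0$, so by continuity of the matrix logarithm on the positive definite cone, $\log ( \rho_k ( \beta ) ) - \log ( \rho_k ) \to 0$ along $\mathcal{K}'$.

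The crucial step exploits the explicit form of the update: from the definition of $\rho_k ( \beta )$ we have $\log ( \rho_k ( \beta ) ) - \log ( \rho_k ) = - \beta \nabla f ( \rho_k ) - ( \log \tilde{C}_k ) I$, so projecting onto the traceless subspace (i.e., subtracting $d^{-1} \tr ( \cdot ) I$) shows that $g_k := \nabla f ( \rho_k ) - d^{-1} \tr ( \nabla f ( \rho_k ) ) I$ satisfies $\beta g_k \to 0$, hence $g_k \to 0$, along $\mathcal{K}'$. Now for any $\sigma \in \mathcal{D}$ with $f ( \sigma ) < + \infty$, convexity of $f$ together with $\tr \sigma = \tr \rho_k = 1$ gives
\begin{equation}
f ( \rho_k ) - f ( \sigma ) \leq \braket{ \nabla f ( \rho_k ), \rho_k - \sigma } = \braket{ g_k, \rho_k - \sigma } \leq \norm{ g_k } \norm{ \rho_k - \sigma } , \notag
\end{equation}
and the right-hand side tends to $0$ along $\mathcal{K}'$ because $\mathcal{D}$ is bounded and $g_k \to 0$. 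Thus $\limsup_{k \in \mathcal{K}'} f ( \rho_k ) \leq f ( \sigma )$ for every such $\sigma$, and taking the infimum over $\sigma$ yields $\limsup_{k \in \mathcal{K}'} f ( \rho_k ) \leq f^\star$; since $f ( \rho_k ) \geq f^\star$ for all $k$, this forces $f ( \rho_k ) \to f^\star$ along $\mathcal{K}'$, whence $f^\infty = f^\star$, as claimed.

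I expect the main obstacle to be conceptual rather than computational. Because we only assume $f$ differentiable, $\nabla f$ need not be continuous, so one cannot pass $\nabla f ( \rho_k )$ to the limit as $\rho_k \to \rho^\star$; the device above circumvents this by never taking a limit of $\nabla f ( \rho_k )$ itself, but only of its component orthogonal to $I$, which is all that affects the objective over $\mathcal{D}$ and which is controlled directly by $D ( \rho_k ( \beta ), \rho_k ) \to 0$. The remaining delicate point is the passage from $D ( \rho_k ( \beta ), \rho_k ) \to 0$ to $\log ( \rho_k ( \beta ) ) - \log ( \rho_k ) \to 0$, which relies on the non-singularity of $\rho^\star$ to obtain a uniform lower bound on the spectra and on the continuity of the matrix logarithm there.
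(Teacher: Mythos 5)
Your proof is correct, but it takes a genuinely different route from the paper's in the decisive step. The paper passes to the limit in $D ( \rho_k ( \beta ), \rho_k ) \to 0$ to get $D ( \rho_\infty ( \beta ), \rho_\infty ) = 0$, hence $\rho_\infty ( \beta ) = \rho_\infty$, and then invokes Lemma \ref{lem_fixed_point} to identify $\rho_\infty$ as a minimizer; this limit passage uses the continuity of $\rho \mapsto \rho ( \beta )$ at $\rho_\infty$, i.e.\ the continuity of $\nabla f$ there (available because a convex function differentiable on an open set is continuously differentiable, as the paper recalls from Rockafellar). You instead never form $\rho_\infty ( \beta )$: you read off from $\log ( \rho_k ( \beta ) ) - \log ( \rho_k ) = - \beta \nabla f ( \rho_k ) - ( \log \tilde{C}_k ) I \to 0$ that the traceless component $g_k$ of $\nabla f ( \rho_k )$ vanishes, and feed this directly into the subgradient inequality $f ( \rho_k ) - f ( \sigma ) \leq \braket{ g_k, \rho_k - \sigma }$. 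In effect you re-derive the content of Lemma \ref{lem_fixed_point} in asymptotic form, obtaining the optimality certificate along the subsequence without ever taking a limit of $\nabla f ( \rho_k )$. What each buys: the paper's version is shorter and reuses an existing lemma; yours is more self-contained (it needs only continuity of the matrix logarithm on the positive-definite cone and linearity of the traceless projection) and would survive even if one did not know that $\nabla f$ is continuous --- though for convex $f$ that worry is moot. One cosmetic point: in $\norm{ g_k } \norm{ \rho_k - \sigma }$ you should name a dual pair of norms (e.g.\ $\norm{ \cdot }_\infty$ and $\norm{ \cdot }_1$, or Frobenius on both); in finite dimension with $\mathcal{D}$ bounded this is immaterial.
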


\begin{proof}
Let $( \rho_k )_{k \in \mathcal{K}}$ be a sub-sequence converging to a non-singular matrix $\rho_\infty \in \mathcal{D}$. 
By Statement 3 of Theorem \ref{thm_main}, there exists a sub-sequence $( \rho_k )_{k \in \mathcal{K}'}$, $\mathcal{K}' \subseteq \mathcal{K}$, such that $D ( \rho_k ( \beta ), \rho_k ) \to 0$ as $k \to \infty$ in $\mathcal{K}'$. 
As $\rho_\infty$ is non-singular, we can take the limit and obtain $D ( \rho_\infty ( \beta ), \rho_\infty ) = 0$, showing that $\rho_\infty ( \beta ) = \rho_\infty$.
Lemma \ref{lem_fixed_point} in the appendix then implies that $\rho_\infty$ is a minimizer of $f$ on $\mathcal{D}$. 
Since the sequence $( f ( \rho_k ) )_{k \in \mathbb{N}}$ is non-increasing and bounded from below by $f^\star$, $\lim_{k \to \infty} f ( \rho_k )$ exists. 
We write
\begin{equation}
f^\star \leq \lim_{k \to \infty} f ( \rho_k ) = \liminf \set{ f ( \rho_k ) | k \in \mathbb{N} } \leq f ( \rho_\infty ) \leq f^\star. \iftoggle{qqed}{\tag*{\qed}}{\notag}
\end{equation}
\end{proof}

It is currently unclear to us whether convergence to the optimum holds, when there does not exist a non-singular limit point; see Section \ref{sec_convergence} for a discussion. 
One way to get around is to consider solving 
\begin{equation}
f_\lambda^\star = \min \set{ f ( \rho ) - \lambda \log \det \rho | \rho \in \mathcal{D} } , \tag{P-$\lambda$} \label{eq_problem_lambda}
\end{equation}
where $\lambda$ is a positive real number. 
As $- \log \det ( \cdot )$ is a barrier function for the set of positive semi-definite matrices \cite{Nesterov1994}, every limit point must be non-singular; otherwise, monotonicity of the sequence $( f ( \rho_k ) )_{k \in \mathbb{N}}$ (Statement 2 in Theorem \ref{thm_main}) cannot hold. 

\begin{proposition}
It holds that $\lim_{\lambda \downarrow 0} f_\lambda^\star = f^\star$. 
\end{proposition}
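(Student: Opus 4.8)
The plan is to show that $f_\lambda^\star$ decreases to $f^\star$ as $\lambda\downarrow 0$ by a squeeze argument. First I would record the easy direction together with monotonicity. Every $\rho\in\mathcal{D}$ has eigenvalues lying in $[0,1]$ and summing to $1$, so $\log\det\rho=\sum_i\log\lambda_i(\rho)\le 0$; equivalently $-\log\det\rho\ge 0$ on $\mathcal{D}$ (with the value $+\infty$ at singular $\rho$, which is precisely why the barrier forces every limit point of the iterates for \eqref{eq_problem_lambda} to be non-singular). Hence for every $\lambda>0$ and every $\rho\in\mathcal{D}$ we have $f(\rho)-\lambda\log\det\rho\ge f(\rho)\ge f^\star$, so $f_\lambda^\star\ge f^\star$. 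For fixed $\rho$ the map $\lambda\mapsto f(\rho)-\lambda\log\det\rho$ is affine in $\lambda$ with nonnegative slope $-\log\det\rho$, hence nondecreasing; taking the infimum over $\rho$ shows $\lambda\mapsto f_\lambda^\star$ is nondecreasing as well. Therefore $\lim_{\lambda\downarrow0}f_\lambda^\star$ exists and equals $\inf_{\lambda>0}f_\lambda^\star\ge f^\star$, and it remains to prove the reverse inequality $\lim_{\lambda\downarrow0}f_\lambda^\star\le f^\star$.

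For the upper bound, fix any non-singular $\rho\in\mathcal{D}$; by the standing differentiability hypothesis $f(\rho)$ is finite, and $f_\lambda^\star\le f(\rho)-\lambda\log\det\rho\to f(\rho)$ as $\lambda\downarrow0$, so $\lim_{\lambda\downarrow0}f_\lambda^\star\le f(\rho)$. Taking the infimum over non-singular $\rho\in\mathcal{D}$ gives $\lim_{\lambda\downarrow0}f_\lambda^\star\le\inf\set{f(\rho)|\rho\in\mathcal{D},\ \rho\ \text{non-singular}}$, so it suffices to show this interior infimum equals $f^\star$. It is trivially $\ge f^\star$; for the reverse, take any $\rho\in\mathcal{D}$ with $f(\rho)<\infty$ and, for $t\in(0,1]$, form $\rho_t:=(1-t)\rho+t\,\mathbb{1}/d$, which is positive definite and hence non-singular. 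Convexity of $f$ yields $f(\rho_t)\le(1-t)f(\rho)+t\,f(\mathbb{1}/d)$, and $f(\mathbb{1}/d)$ is finite because $\mathbb{1}/d$ is a non-singular density matrix; letting $t\downarrow0$ shows $\inf\set{f(\sigma)|\sigma\in\mathcal{D}\ \text{non-singular}}\le f(\rho)$. Since $f^\star>-\infty$, a minimizer (or minimizing sequence) for \eqref{eq_problem} has finite objective value, so taking the infimum over such $\rho$ gives the interior infimum $\le f^\star$.

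Combining the two directions, $f^\star\le\lim_{\lambda\downarrow0}f_\lambda^\star\le f^\star$, which is the claim. The only delicate point is the last step: a priori the minimum of $f$ over $\mathcal{D}$ could be attained only at singular matrices, or $f$ could be discontinuous on $\partial\mathcal{D}$, so one cannot simply substitute a minimizer into the upper bound. The convexity-perturbation argument toward the maximally mixed state $\mathbb{1}/d$ circumvents this, and it relies only on $f$ being finite at one interior state of $\mathcal{D}$ — exactly what the paper assumes throughout (the differentiability hypothesis of Theorem~\ref{thm_main}). I do not expect any further obstacle; if one additionally wanted the limit to be attained as a genuine minimum, that would require lower semicontinuity of $f$ together with compactness of $\mathcal{D}$, but this is not needed for the stated equality of optimal values.
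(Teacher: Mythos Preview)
Your proof is correct and follows essentially the same route as the paper's: use nonnegativity of $-\log\det(\cdot)$ on $\mathcal{D}$ to obtain monotonicity of $\lambda\mapsto f_\lambda^\star$ and the lower bound, then match the upper bound. The paper compresses the argument into a one-line exchange of infima, $\lim_{\lambda\downarrow0}f_\lambda^\star=\inf_{\lambda>0}\inf_{\rho}f_\lambda(\rho)=\inf_{\rho}\inf_{\lambda>0}f_\lambda(\rho)=\inf_\rho f(\rho)$; your convex-perturbation step toward $\mathbb{1}/d$ is exactly what justifies that last equality when singular $\rho$ are permitted, a point the paper leaves implicit.
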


\begin{proof}
Notice that $- \log \det ( \cdot ) > 0$ on $\mathcal{D}$. 
We write
\begin{align}
\lim_{\lambda \downarrow 0} f_\lambda^\star = \inf_{\lambda > 0} f_\lambda^\star = \inf_{\lambda > 0} \inf_{\rho \in \mathcal{D}} f_\lambda ( \rho ) = \inf_{\rho \in \mathcal{D}} \inf_{\lambda > 0} f_\lambda ( \rho ) = \inf_{\rho \in \mathcal{D}} f ( \rho ) = f^\star , \notag
\end{align}
where $f_\lambda ( \rho ) := f ( \rho ) - \lambda \log \det \rho$. \iftoggle{qqed}{\qed}{}
\end{proof}

Existence of a non-singular limit point can be easily verified in some applications. 
For example, hedged quantum state tomography corresponds to solving \eqref{eq_problem} with the objective function 
\begin{equation}
f_{\text{HQST}} ( \rho ) := f_{\text{QST}} ( \rho ) - \lambda \log \det \rho , \notag
\end{equation}
for some $\lambda > 0$ \cite{Blume-Kohout2010}. 
As discussed above, all limit points of the iterates must be non-singular. 
Similarly in the probability simplex constraint case, if the optimization problem involves the Burg entropy as in \cite{Decarreau1992}, all limit points must be element-wisely strictly positive\footnote{For any element-wisely strictly positive vector $v := ( v_i )_{1 \leq i \leq d}$, the Burg entropy is defined as $b ( v ) := - \sum_{i = 1}^d \log v_i$.}.

\section*{Notation}
Let $g$ be a convex differentiable function. 
We denote its (effective) domain by $\dom g$, and gradient by $\nabla g$. 
If $g$ is defined on $\mathbb{R}$, we write $g'$, $g''$, and $g'''$ for its first, second, and third derivatives, respectively. 

Let $A \in \mathbb{C}^{d \times d}$. 
We denote its largest and smallest eigenvalues by $\lambda_{\max}( A )$ and $\lambda_{\min} ( A )$, respectively. 
We denote its Schatten $p$-norm by $\norm{ A }_p$.
We will only use the Hilbert-Schmidt inner product in this paper; that is, $\braket{ A, B } := \tr ( A^{\mathrm{H}} B )$ for any $A, B \in \mathbb{C}^{d \times d}$, where $A^{\mathrm{H}}$ denotes the Hermitian of $A$. 

The function $\exp( \cdot )$ and $\log ( \cdot )$ in \eqref{eq_eg} are matrix exponential and logarithm functions, respectively. 
In general, let $X \in \mathbb{C}^{d \times d}$ be Hermitian, and $X = \sum_{j} \lambda_j P_j$ be its spectral decomposition. 
Let $g$ be a real-valued function whose domain contains $\set{ \lambda_j }$. 
Then $g ( X ) := \sum_j g ( \lambda_j ) P_j$. 

Let $\rho, \sigma \in \mathcal{D}$ be non-singular. 
The negative von Neumann entropy is defined as 
\begin{equation}
h ( \rho ) := \tr ( \rho \log \rho ) - \tr ( \rho ) . \notag
\end{equation}
The quantum relative entropy is defined as 
\begin{equation}
D ( \rho, \sigma ) := \tr ( \rho \log \rho ) - \tr ( \rho \log \sigma ) - \tr ( \rho - \sigma ) , \notag
\end{equation}
which is jointly convex. 
It is easily checked that the quantum relative entropy is the Bregman divergence induced by the negative von Neumann entropy; hence, it is always non-negative. 
Pinsker's inequality says that \cite{Hiai1981}
\begin{equation}
D ( \rho, \sigma ) \geq \frac{1}{2} \norm{ \rho - \sigma }_1^2 . \notag
\end{equation}

\section{Proof of Theorem \ref{thm_main}} \label{sec_proof}

The key to our analysis is the following proposition. 

\begin{proposition} \label{prop_bertsekas}
Let $\rho \in \mathcal{D}$ be non-singular. 
Suppose that 
\begin{equation}
\Delta := \lambda_{\max} ( \nabla f ( \rho ) ) - \lambda_{\min} ( \nabla f ( \rho ) ) > 0 .  \notag 
\end{equation}
Then the mapping
\begin{equation}
\alpha \mapsto \frac{ D ( \rho ( \alpha ), \rho ) }{ \eu^{\Delta \alpha} ( \Delta \alpha - 1 ) + 1 } \label{eq_bertsekas_global}
\end{equation}
is non-increasing on $( 0, + \infty )$. 
\end{proposition}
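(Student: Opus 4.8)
The plan is to reduce the monotonicity of the map in \eqref{eq_bertsekas_global} to a one-dimensional statement about the scalar function $\varphi(\alpha) := D(\rho(\alpha),\rho)$, and then to control $\varphi$ via its derivative using the self-concordant-likeness of the log-partition function promised in the introduction. First I would write $\rho(\alpha)$ explicitly in terms of the log-partition function $A(\alpha) := \log \tr \exp[\log \rho - \alpha \nabla f(\rho)]$, so that $\log C_\alpha$ (the normalizing constant) equals $A(\alpha)$, and then expand the relative entropy: using $\log \rho(\alpha) = \log\rho - \alpha \nabla f(\rho) - A(\alpha) I$ and $\tr\rho(\alpha)=\tr\rho=1$, one gets the clean identity
\begin{equation}
\varphi(\alpha) = D(\rho(\alpha),\rho) = -\alpha \braket{\nabla f(\rho),\rho(\alpha)} - A(\alpha) \notag
\end{equation}
(possibly up to a sign convention I would fix carefully). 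Differentiating, and using that $A'(\alpha) = -\braket{\nabla f(\rho),\rho(\alpha)}$ together with the fact that $\tfrac{d}{d\alpha}\rho(\alpha)$ is trace-zero and orthogonal to the identity, the first-order term telescopes and I expect $\varphi'(\alpha) = -\alpha\, A''(\alpha) = \alpha\,\mathrm{Var}_{\rho(\alpha)}(\nabla f(\rho)) \geq 0$, where $A''$ is the variance of $\nabla f(\rho)$ under the Gibbs state $\rho(\alpha)$. So $\varphi$ is nondecreasing, and the numerator is nonnegative; the content of the proposition is the precise \emph{rate} at which it can grow.

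Next I would set $g(\alpha) := \eu^{\Delta\alpha}(\Delta\alpha - 1) + 1$, the denominator, and note $g(0)=0$, $g'(\alpha) = \Delta^2 \alpha\, \eu^{\Delta\alpha} \geq 0$, so $g$ is also nonnegative and increasing on $(0,\infty)$, and both $\varphi$ and $g$ vanish at $0$. To show $\varphi/g$ is non-increasing it suffices to show $\varphi' g - \varphi g' \leq 0$ pointwise. Substituting $\varphi' = \alpha A''(\alpha)$ and $g' = \Delta^2\alpha\eu^{\Delta\alpha}$ and cancelling the common factor $\alpha$, the inequality becomes
\begin{equation}
A''(\alpha)\,\bigl(\eu^{\Delta\alpha}(\Delta\alpha-1)+1\bigr) \leq \Delta^2 \eu^{\Delta\alpha}\,\varphi(\alpha). \notag
\end{equation}
This is now a self-contained bound relating the second derivative of the log-partition function to its "integrated" value $\varphi(\alpha)=\int_0^\alpha t A''(t)\,dt$. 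The natural tool is the self-concordant-likeness estimate: because the eigenvalues of $-\alpha\nabla f(\rho)$ relative to $\log\rho$ spread by at most $\Delta\alpha$, one has the differential inequality $|A'''(\alpha)| \leq \Delta\, A''(\alpha)$ — i.e. $A$ is $\Delta$-self-concordant-like as a function of $\alpha$ — from which $A''(\alpha) \leq A''(0)\eu^{\Delta\alpha}$ and more refined two-sided comparisons follow by integrating. I would derive exactly the comparison needed: integrating $A'''\leq \Delta A''$ twice against the correct kernels should yield precisely $\varphi(\alpha) \geq A''(\alpha)\,g(\alpha)/(\Delta^2\eu^{\Delta\alpha})$, which is the displayed inequality. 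Alternatively, one can avoid $A'''$ entirely by working with the variance representation directly and invoking a Peierls–Bogoliubov-type inequality (the paper flags an improvement of it), bounding the moment generating function $\eu^{A(\alpha)-A(0)}$ and its derivatives by the worst case in which the spectrum of $\nabla f(\rho)$ is a two-point distribution at the extreme eigenvalues with gap $\Delta$; the extremal two-point case is exactly what produces the functions $\eu^{\Delta\alpha}(\Delta\alpha-1)+1$ and $\Delta^2\eu^{\Delta\alpha}$.

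The main obstacle I anticipate is the last step: getting the constant and the exact form of $g$ to come out right, rather than merely an inequality of the same shape with a worse constant. The clean way is probably to reduce to the scalar two-point model and verify that for a Bernoulli random variable taking two values with difference $\Delta$, the quantity $A''(\alpha)g(\alpha) / (\Delta^2 \eu^{\Delta\alpha}\varphi(\alpha))$ is $\leq 1$ for all $\alpha$ and parameter values — ideally with equality in a degenerate limit, which would show $g$ cannot be improved — and then lift this to the matrix case by a majorization/monotonicity argument in the spectral parameters of $\nabla f(\rho)$ (the variance and the relative entropy are both convex or monotone in a way that lets the two-point extremal case dominate). A secondary technical point is justifying differentiation under the trace and the smoothness of $\alpha \mapsto \rho(\alpha)$, which is routine since everything is an analytic function of $\alpha$ through the matrix exponential, but should be stated. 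Handling $\Delta > 0$ is needed only so that $g(\alpha) > 0$ for $\alpha > 0$, so no separate care is required there beyond the hypothesis.
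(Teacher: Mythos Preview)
Your proposal is correct and follows essentially the same route as the paper: express $D(\rho(\alpha),\rho)=\alpha A'(\alpha)-A(\alpha)$ (the paper's Lemma~\ref{lem_divergence}, using $A(0)=0$), reduce monotonicity of the ratio to the lower bound $D(\rho(\alpha),\rho)\geq A''(\alpha)\,(\eu^{-\Delta\alpha}+\Delta\alpha-1)/\Delta^2$, and derive that bound from the $\Delta$-self-concordant-likeness $|A'''|\leq\Delta A''$ (the paper's Lemmas~\ref{lem_self-concordance}--\ref{lem_self-concordance_bound}). Your concern about recovering the exact form of $g$ and the two-point extremal reduction are unnecessary detours: the self-concordant-like lower bound already yields precisely the inequality you need, and the paper simply solves $(\log\chi)'(\alpha)=\Delta^2\alpha/(\eu^{-\Delta\alpha}+\Delta\alpha-1)$ to obtain $\chi(\alpha)=\eu^{\Delta\alpha}(\Delta\alpha-1)+1$ on the nose.
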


Proposition \ref{prop_bertsekas} was inspired by a lemma due to Gafni and Bertsekas \cite{Gafni1982}, which says that the mapping
\begin{equation}
\alpha \mapsto \frac{\norm{ \Pi_{\mathcal{D}} ( \rho - \alpha \nabla f ( \rho ) ) - \rho }_{\text{F}}}{\alpha} \label{eq_bertsekas_original}
\end{equation}
is non-increasing on $[ 0, + \infty )$, where $\Pi_{\mathcal{D}}$ denotes projection onto $\mathcal{D}$ with respect to the Frobenius norm $\norm{ \cdot }_{\text{F}}$. 
The lemma of Gafni and Bertsekas was proved by an Euclidean geometric argument; see \cite{Bertsekas2016} for an illustration. 
In comparison, we will prove Proposition \ref{prop_bertsekas} by exploiting the self-concordant likeness of the log-partition function. 

%Na\"{\i}vely speaking, a Bregman divergence is similar to a norm squared. 
%With regard to \eqref{eq_bertsekas_original}, it is natural to expect that the mapping $\alpha \mapsto \alpha^{- 2} D ( \rho ( \alpha ), \rho )$ is non-increasing on $( 0, + \infty )$. 
%The proof in Section \ref{sec_bertsekas}, however, shows that this does not hold. 
%Due to this difference, the global monotonicity of \eqref{eq_bertsekas_global} and local monotonicity of  \eqref{eq_bertsekas_local} are both necessary in our analysis. 

We prove Proposition \ref{prop_bertsekas} in Section \ref{sec_bertsekas}. 
Then we prove the three statements in Theorem \ref{thm_main} separately in the following three sub-sections. 
To simplify the presentation, we put some necessary technical lemmas in Appendix \ref{app_technical}. 

\subsection{Self-concordant Likeness of the Log-Partition Function and \\ Proof of Proposition \ref{prop_bertsekas}} \label{sec_bertsekas}

For any non-singular $\rho \in \mathcal{D}$ and $\alpha > 0$, define
\begin{equation}
\varphi ( \alpha; \rho ) := \log \tr \exp \left[ \log ( \rho ) - \alpha \nabla f ( \rho ) \right] , \notag
\end{equation}
which, in statistical physics, is the log-partition function of the Gibbs state for the Hamiltonian $H_\alpha := - \log ( \rho ) + \alpha \nabla f ( \rho )$ at temperature $1$.
We will simply write $\varphi ( \alpha )$ instead of $\varphi ( \alpha; \rho )$, when the corresponding $\rho$ is clear from the context or irrelevant. 

The log-partition function is indeed closely related to the EG method, as shown by the following lemma. 

\begin{lemma} \label{lem_divergence}
For any non-singular $\rho \in \mathcal{D}$ and $\alpha > 0$, it holds that
\begin{equation}
D ( \rho ( \alpha ), \rho ) = \varphi ( 0 ) - \left[ \varphi ( \alpha ) - \varphi' ( \alpha ) ( 0 - \alpha ) \right] . \notag
\end{equation}
\end{lemma}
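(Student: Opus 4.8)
The plan is to expand both sides explicitly in terms of the data $\log(\rho)$ and $\nabla f(\rho)$ and verify the identity by direct computation. First I would record the key objects: writing $H_\alpha := -\log(\rho) + \alpha \nabla f(\rho)$, we have $\varphi(\alpha) = \log\tr\exp(-H_\alpha)$, and the Gibbs state $\rho(\alpha) = \tilde C_\alpha^{-1}\exp(-H_\alpha)$ with $\tilde C_\alpha = \tr\exp(-H_\alpha) = \eu^{\varphi(\alpha)}$. Hence $\log\rho(\alpha) = -H_\alpha - \varphi(\alpha) I = \log(\rho) - \alpha\nabla f(\rho) - \varphi(\alpha) I$. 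In particular $\varphi(0) = \log\tr\exp(\log\rho) = \log\tr\rho = 0$, which already simplifies the right-hand side to $-[\varphi(\alpha) + \alpha\varphi'(\alpha)]$; I would keep $\varphi(0)$ symbolic as in the statement and let it collapse at the end.

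Next I would compute $\varphi'(\alpha)$. Differentiating $\varphi(\alpha) = \log\tr\exp(-H_\alpha)$ and using that $\tfrac{\du}{\du\alpha}H_\alpha = \nabla f(\rho)$ together with the standard trace identity $\tfrac{\du}{\du\alpha}\tr\exp(-H_\alpha) = -\tr\big(\tfrac{\du H_\alpha}{\du\alpha}\exp(-H_\alpha)\big)$ (valid despite non-commutativity, since the derivative falls on a trace), one gets
\begin{equation}
\varphi'(\alpha) = \frac{-\tr\big(\nabla f(\rho)\exp(-H_\alpha)\big)}{\tr\exp(-H_\alpha)} = -\tr\big(\nabla f(\rho)\,\rho(\alpha)\big). \notag
\end{equation}
Then I would plug the expression for $\log\rho(\alpha)$ into the definition $D(\rho(\alpha),\rho) = \tr(\rho(\alpha)\log\rho(\alpha)) - \tr(\rho(\alpha)\log\rho) - \tr(\rho(\alpha) - \rho)$. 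Since $\tr\rho(\alpha) = \tr\rho = 1$, the last term vanishes, and substituting $\log\rho(\alpha) = \log\rho - \alpha\nabla f(\rho) - \varphi(\alpha)I$ gives
\begin{equation}
D(\rho(\alpha),\rho) = -\alpha\tr\big(\nabla f(\rho)\,\rho(\alpha)\big) - \varphi(\alpha) = \alpha\varphi'(\alpha) - \varphi(\alpha). \notag
\end{equation}
Comparing with $\varphi(0) - [\varphi(\alpha) - \varphi'(\alpha)(0-\alpha)] = \varphi(0) - \varphi(\alpha) - \alpha\varphi'(\alpha)$ and using $\varphi(0) = 0$... wait, a sign check is needed here, so I would be careful: the right-hand side equals $-\varphi(\alpha) - \alpha\varphi'(\alpha)$, which does not match $\alpha\varphi'(\alpha) - \varphi(\alpha)$ unless $\varphi'$ enters with the opposite sign — so I must recompute $\log\rho(\alpha) = \log\rho - \alpha\nabla f(\rho) - \varphi(\alpha)I$ against $D$ once more and track every sign, since the interpretation of the bracket $[\varphi(\alpha) - \varphi'(\alpha)(0-\alpha)]$ as the first-order Taylor expansion of $\varphi$ at $\alpha$ evaluated at $0$ is the intended geometric content (the Bregman-divergence-of-$\varphi$ picture), and that is what pins down the correct signs.

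The computation itself is routine matrix calculus; the main obstacle is bookkeeping — specifically, justifying the differentiation-under-the-trace step for matrix exponentials of non-commuting families, and keeping the signs consistent between the ``$-H_\alpha$'' convention in $\varphi$ and the ``$-\alpha\nabla f$'' convention in the EG update. I expect the cleanest route is to observe that the right-hand side is exactly the Bregman divergence $D_\varphi(0,\alpha) = \varphi(0) - \varphi(\alpha) - \varphi'(\alpha)(0-\alpha)$ associated with the scalar convex function $\varphi$, and then to show $D_\varphi(0,\alpha) = D(\rho(\alpha),\rho)$ by the substitution above; framing it this way makes the identity transparent and isolates the only real content, namely the formula for $\varphi'$.
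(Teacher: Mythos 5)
Your proposal is correct and supplies exactly the ``direct calculation'' the paper omits: from $\log\rho(\alpha) = \log\rho - \alpha\nabla f(\rho) - \varphi(\alpha)I$ and $\varphi'(\alpha) = -\tr\bigl(\nabla f(\rho)\,\rho(\alpha)\bigr)$ one gets $D(\rho(\alpha),\rho) = \alpha\varphi'(\alpha) - \varphi(\alpha)$, which is precisely the Bregman divergence of the scalar function $\varphi$ between $0$ and $\alpha$, so your route is the intended one. The sign discrepancy you flag is real and is a typo in the statement, not an error in your computation: as printed, the bracket $\left[\varphi(\alpha) - \varphi'(\alpha)(0-\alpha)\right]$ makes the right-hand side equal $-\varphi(\alpha) - \alpha\varphi'(\alpha)$, whereas the correct identity is $D(\rho(\alpha),\rho) = \varphi(0) - \left[\varphi(\alpha) + \varphi'(\alpha)(0-\alpha)\right]$, the form with the plus sign that the paper itself uses when invoking this lemma in the proof of Proposition~\ref{prop_bertsekas}.
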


\begin{proof}
Direct calculation. \iftoggle{qqed}{\qed}{}
\end{proof}

We say that a three times continuously differentiable convex function $g$ is \emph{$\mu$-self-concordant like}, if $\abs{ g''' ( x ) } \leq \mu g'' ( x )$ for all $x$ \cite{Bach2010,Bach2014,Tran-Dinh2015}. 

\begin{lemma} \label{lem_self-concordance}
For any non-singular $\rho \in \mathcal{D}$, the function $\varphi ( \alpha )$ is $\Delta$-self-concordant like, where $\Delta := \lambda_{\max} ( \nabla f ( \rho ) ) - \lambda_{\min} ( \nabla f ( \rho ) )$. 
\end{lemma}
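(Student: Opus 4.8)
The plan is to show that $\varphi(\alpha)$ is the log-partition (cumulant generating) function of a one-parameter exponential family, and then use the standard identification of its derivatives with cumulants of a bounded random variable. Concretely, writing $H_\alpha := -\log(\rho) + \alpha \nabla f(\rho)$ and $G := \nabla f(\rho)$, I would introduce the Gibbs state $\sigma_\alpha := \exp(-H_\alpha)/\tr\exp(-H_\alpha)$ and compute, by the rules for differentiating matrix exponentials (Dyson/Duhamel formula), that $\varphi'(\alpha) = -\tr(\sigma_\alpha G)$ and $\varphi''(\alpha)$ equals a Kubo–Mori-type variance of $G$ under $\sigma_\alpha$, hence $\varphi'' \ge 0$ and $\varphi$ is convex. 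The third derivative $\varphi'''(\alpha)$ is then a third Kubo–Mori cumulant of $G$.

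The key step is the bound $\abs{\varphi'''(\alpha)} \le \Delta\, \varphi''(\alpha)$. The cleanest route I would take is a \emph{reduction to the commuting/scalar case}: observe that all quantities $\varphi(\alpha), \varphi'(\alpha), \ldots$ depend on $G$ only through $G - c I$ for the purpose of the cumulant structure (only the centered variable matters for $\varphi''$ and $\varphi'''$, since adding a multiple of the identity shifts $H_\alpha$ by a scalar). So without loss of generality one may assume $\lambda_{\min}(G) = 0$ and $\lambda_{\max}(G) = \Delta$, i.e. $0 \le G \le \Delta I$ in the Löwner order. Then the cumulant integral representations express $\varphi''$ and $\varphi'''$ as expectations of expressions that are multilinear in copies of $G$ sandwiched between spectral projections of $H_\alpha$; bounding one factor of $G$ by $\Delta I$ in operator norm (using $0 \le G \le \Delta I$) turns the triple-$G$ expression for $\varphi'''$ into $\Delta$ times the double-$G$ expression for $\varphi''$, up to checking signs. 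I expect the sign bookkeeping in the Kubo–Mori third-cumulant formula to be the main obstacle, and I would handle it by passing to the eigenbasis of $H_\alpha$: there $\varphi''$ and $\varphi'''$ become explicit sums over eigenvalue triples weighted by divided differences of $\exp$, and $\abs{\varphi'''}$ termwise dominated by $\Delta\varphi''$ reduces to an elementary inequality among divided differences of the exponential function (convexity/monotonicity of divided differences), combined with $\abs{G_{ij}} \le \Delta$ entrywise-in-a-suitable-sense is replaced by the correct statement $\sum_j \abs{G_{ij}}^2 \le \Delta \cdot (\text{something})$ coming from $0 \le G \le \Delta I$.

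Alternatively, and perhaps more robustly, I would invoke the known fact (the references \cite{Bach2010,Tran-Dinh2015} on self-concordant likeness) that for the scalar log-sum-exp $g(\alpha) = \log \sum_i w_i \eu^{a_i \alpha}$ with $\max_i a_i - \min_i a_i = \Delta$ one has $\abs{g'''} \le \Delta g''$, because the underlying random variable takes values in an interval of length $\Delta$ and its third central moment is bounded by $\Delta$ times its variance. The matrix case then follows either by the eigenbasis reduction above, or by noting that $\varphi$ restricted to the line $\alpha \mapsto H_\alpha$ is a noncommutative perturbation but the relevant two- and three-point correlation functions still obey the same interval-length bound once $G$ is shifted to $[0,\Delta]$. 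I would pick whichever of these two presentations the earlier sections set up; either way the heart is: centered, operator-norm-bounded variable of spread $\Delta$ $\Rightarrow$ third cumulant $\le \Delta \times$ second cumulant, and the noncommutativity is absorbed by working in the eigenbasis of $H_\alpha$ where everything is a positively-weighted sum.
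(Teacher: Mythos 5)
Your core idea is the same as the paper's: after centering, $\varphi''(\alpha)$ and $\varphi'''(\alpha)$ are the second and third central moments of a variable whose values lie in an interval of length $\Delta$, whence $\abs{\varphi'''(\alpha)} \leq \mathsf{E}\abs{\eta_\alpha - \mathsf{E}\eta_\alpha}^3 \leq \Delta\,\varphi''(\alpha)$. The paper implements this through Lemma \ref{lem_moments}: writing $G := -\nabla f(\rho)$ with spectral decomposition $G = \sum_j \lambda_j P_j$, it introduces a \emph{classical} random variable $\eta_\alpha$ with $\mathsf{P}(\eta_\alpha = \lambda_j) = \tr(P_j \exp(H_\alpha))/\tr\exp(H_\alpha)$, identifies $\varphi''$ and $\varphi'''$ with the ordinary central moments of $\eta_\alpha$, and concludes in one line because $\eta_\alpha$ takes values in $[-\lambda_{\max}(\nabla f(\rho)), -\lambda_{\min}(\nabla f(\rho))]$. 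Your observation that shifting $G$ by a multiple of the identity only changes $\varphi$ by an affine function of $\alpha$ is correct and consistent with this reduction.

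Where you diverge is in confronting the noncommutativity head on: you want $\varphi''$ and $\varphi'''$ as Kubo--Mori cumulants and a termwise bound in the eigenbasis of $H_\alpha$ via divided differences of $\exp$. That is exactly where your proposal has a genuine gap. The inequality you defer to --- that the third-order divided-difference expression for $\varphi'''$ is termwise dominated by $\Delta$ times the second-order one for $\varphi''$ --- is not an elementary fact about divided differences; it is the entire content of the lemma in the noncommutative setting, and you leave both the sign bookkeeping and the precise form of the bound on the off-diagonal entries of $G$ unresolved (``$\Delta\cdot(\text{something})$''). As written, the proposal reduces the lemma to an unproved claim of comparable difficulty. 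Your structural instinct is nonetheless sound and worth recording: by the Duhamel formula, $\frac{\du}{\du\alpha}\tr(G\eu^{H_\alpha}) = \int_0^1 \tr\left( G\eu^{sH_\alpha}G\eu^{(1-s)H_\alpha}\right)\du s$, which coincides with $\tr(G^2\eu^{H_\alpha})$ only when $G$ and $\log\rho$ commute, so the ``direct calculation'' in Lemma \ref{lem_moments} is precisely the step that converts the quantum problem into the classical one. A complete proof along your lines would have to either justify that reduction or actually carry out the divided-difference estimate you sketch; neither is done in the proposal.
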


\begin{proof}
Lemma \ref{lem_moments} shows that 
\begin{equation}
\varphi'' ( \alpha ) = \mathsf{E} \, \left( \eta_\alpha - \mathsf{E}\, \eta_\alpha \right)^2, \quad \varphi''' ( \alpha ) = \mathsf{E} \, \left( \eta_\alpha - \mathsf{E}\, \eta_\alpha \right)^3 , \notag
\end{equation}
where $\eta_\alpha$ is a random variable taking values in $[ - \lambda_{\max} ( \nabla f ( \rho ) ), - \lambda_{\min} ( \nabla f ( \rho ) ) ]$. 
The lemma follows. \iftoggle{qqed}{\qed}{}
\end{proof}

The following sandwich inequality follows from self-concordant likeness \cite{Tran-Dinh2015}. 

\begin{lemma} \label{lem_self-concordance_bound}
For any non-singular $\rho \in \mathcal{D}$, it holds that
\begin{align}
\frac{ \left( \eu^{- \Delta \alpha} + \Delta \alpha - 1 \right) }{ \Delta^2 } \varphi'' ( \alpha ) & \leq \varphi ( 0 ) - \left[ \varphi ( \alpha ) - \varphi' ( \alpha ) ( 0 - \alpha ) \right] \notag \\
& \leq \frac{ \left( \eu^{\Delta \alpha} - \Delta \alpha - 1 \right) }{ \Delta^2 } \varphi'' ( \alpha ) . \notag
\end{align}
\end{lemma}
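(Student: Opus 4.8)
The plan is to convert the self-concordant-likeness bound $\abs{\varphi'''(x)} \le \Delta\,\varphi''(x)$ into a two-sided exponential estimate for $\varphi''$ and then integrate it twice.

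I would first record that $\varphi''(\alpha) \ge 0$ for all $\alpha$ (by the variance representation in Lemma \ref{lem_moments}, or simply because $\varphi$ is convex). If $\varphi''$ vanishes at some point $\alpha_0$, then $\abs{\varphi'''} \le \Delta\,\varphi''$ makes $x \mapsto \eu^{-\Delta x}\varphi''(x)$ non-increasing and $x \mapsto \eu^{\Delta x}\varphi''(x)$ non-decreasing; since both are non-negative and vanish at $\alpha_0$, they vanish identically, so $\varphi'' \equiv 0$, $\varphi$ is affine, and all three expressions in the claimed inequality are $0$. Hence I may assume $\varphi'' > 0$ on $(0,\infty)$.

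Next, dividing $-\Delta\,\varphi''(x) \le \varphi'''(x) \le \Delta\,\varphi''(x)$ by $\varphi''(x) > 0$ gives $\bigl|\tfrac{\du}{\du x}\log\varphi''(x)\bigr| \le \Delta$, so integrating between $t$ and $\alpha$ yields
\[
\varphi''(\alpha)\,\eu^{-\Delta\abs{\alpha - t}} \le \varphi''(t) \le \varphi''(\alpha)\,\eu^{\Delta\abs{\alpha - t}} .
\]
On the other hand, Taylor's theorem with integral remainder about the point $\alpha$, evaluated at $0$, shows that the middle expression in the lemma equals $\int_0^\alpha t\,\varphi''(t)\,\du t$. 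Since $\abs{\alpha - t} = \alpha - t$ on $[0,\alpha]$, substituting the two bounds above reduces the claim to the elementary integrals $\int_0^\alpha t\,\eu^{\Delta(\alpha - t)}\,\du t = (\eu^{\Delta\alpha} - \Delta\alpha - 1)/\Delta^2$ and $\int_0^\alpha t\,\eu^{-\Delta(\alpha - t)}\,\du t = (\eu^{-\Delta\alpha} + \Delta\alpha - 1)/\Delta^2$, each obtained by a single integration by parts; these are exactly the coefficients of $\varphi''(\alpha)$ in the asserted upper and lower bounds.

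There is no real obstacle here: the argument is routine once the representation $\int_0^\alpha t\,\varphi''(t)\,\du t$ is in place. The only points that need care are disposing of the degenerate case $\varphi'' \equiv 0$ before dividing by $\varphi''$, and keeping the signs straight in the change of variables and the integration by parts. Alternatively, one can simply invoke the corresponding sandwich estimate for self-concordant-like functions from \cite{Tran-Dinh2015}.
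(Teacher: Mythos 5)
Your argument is correct and is exactly the standard derivation that the paper outsources to the citation of \cite{Tran-Dinh2015}: integrate $\abs{(\log\varphi'')'}\leq\Delta$ to sandwich $\varphi''(t)$ between $\varphi''(\alpha)\eu^{\pm\Delta(\alpha-t)}$, write the middle quantity as the Taylor remainder $\int_0^\alpha t\,\varphi''(t)\,\du t$, and evaluate the resulting elementary integrals; your handling of the degenerate case $\varphi''\equiv 0$ before dividing is also the right precaution. (Note only that the bracketed term in the lemma should be read as the linearization $\varphi(\alpha)+\varphi'(\alpha)(0-\alpha)$, consistent with Lemma \ref{lem_divergence} and the proof of Proposition \ref{prop_bertsekas}; you have implicitly adopted this correct reading.)
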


\begin{remark} \label{rem_peierls_bogoliubov}
The lower bound improves upon the Peierls-Bogoliubov inequality \cite{Ohya1993}, which says that
\begin{equation}
0 \leq \varphi ( 0 ) - \left[ \varphi ( \alpha ) - \varphi' ( \alpha ) ( 0 - \alpha ) \right] . \notag
\end{equation}
Notice that lower bound provided by Lemma \ref{lem_self-concordance_bound} is always non-negative.
\end{remark}

Now we are ready to prove Proposition \ref{prop_bertsekas}. 

\begin{proof}[Proposition \ref{prop_bertsekas}]
We look for a differentiable function $\chi: ( 0, + \infty ) \to ( 0, + \infty )$, such that the mapping
\begin{equation}
g ( \alpha ) := \frac{D ( \rho ( \alpha ), rho )}{\chi ( \alpha )} \notag
\end{equation}
is non-increasing on $( 0, + \infty )$. 
Note that $g$ is non-increasing if and only if $g' \leq 0$ on $( 0, + \infty )$. 
Applying Lemma \ref{lem_divergence}, a direct calculation gives
\begin{equation}
g' ( \alpha ) = \frac{\alpha \varphi'' ( \alpha ) \chi ( \alpha ) - \left\{ \varphi ( 0 ) - \left[ \varphi ( \alpha ) + \varphi' ( \alpha ) ( 0 - \alpha ) \right] \right\} \chi' ( \alpha )}{\left[ \chi' ( \alpha ) \right]^2} . \notag
\end{equation}
Therefore, $g' ( \alpha ) \leq 0$ if and only if the numerator is negative, i.e., 
\begin{equation}
( \log \chi )' ( \alpha ) \geq \frac{\alpha \varphi'' ( \alpha )}{\varphi ( 0 ) - \left[ \varphi ( \alpha ) + \varphi' ( \alpha ) ( 0 - \alpha ) \right]} , \notag
\end{equation}
where we have used the fact that $\chi' / \chi = ( \log \chi )'$. 
By Lemma \ref{lem_self-concordance_bound}, we can set 
\begin{equation}
( \log \chi )' ( \alpha ) = \frac{\Delta^2 \alpha}{\eu^{- \Delta \alpha} + \Delta \alpha - 1} .  \notag
\end{equation}
Solving the equation gives $\chi ( \alpha ) := \eu^{\Delta \alpha} ( \Delta \alpha - 1 ) + 1$. \iftoggle{qqed}{\qed}{}
% 
%It is easily checked that there exists some $\tilde{\alpha} > 0$ such that $\eu^{\Delta \alpha} ( \Delta \alpha - 1 ) + 1 < 2 / \alpha$ on $[ 0, \tilde{\alpha} ]$---the left-hand side is strictly increasing from zero to $+ \infty$, while the right-hand side is strictly decreasing from $+ \infty$ to zero. 
%Therefore, if we only require $g' \leq 0$ on $( 0, \tilde{\alpha} )$, it suffices to set $( \log \chi )' ( \alpha ) := 2 / \alpha$, with regard to \eqref{eq_chi_exact}. 
%Solving the equation gives $\chi ( \alpha ) = x^2$. 
%Obviously, the value of $\tilde{\alpha}$ depends only on $\Delta$, and decreases with increasing $\Delta$. 
\end{proof}

For convenience, we will apply Proposition \ref{prop_bertsekas} via the following corollary. 

\begin{corollary} \label{cor_bertsekas}
Let $\rho \in \mathcal{D}$ be non-singular and $\bar{\alpha} > 0$. 
Suppose that $\Delta := \lambda_{\max} ( \nabla f ( \rho ) ) - \lambda_{\min} ( \nabla f ( \rho ) )$ is strictly positive.
It holds that 
\begin{equation}
\frac{ D ( \rho ( \alpha ), \rho ) }{ \alpha^2 } \geq \kappa D ( \rho ( \bar{\alpha} ), \rho ) , \quad \forall \alpha \in ( 0, \bar{\alpha} ] , \notag
\end{equation}
where $\kappa := \left\{ 2 \left[ \eu^{\Delta \bar{\alpha}} ( \Delta \bar{\alpha} - 1 ) + 1 \right] \right\}^{-1} \Delta^2$. 
\end{corollary}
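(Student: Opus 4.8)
The plan is to obtain Corollary~\ref{cor_bertsekas} directly from Proposition~\ref{prop_bertsekas}, at the cost of one elementary scalar estimate. Write $\chi ( \alpha ) := \eu^{\Delta \alpha} ( \Delta \alpha - 1 ) + 1$ for the denominator appearing in \eqref{eq_bertsekas_global}. The estimate I need is
\[
\eu^{t} ( t - 1 ) + 1 \geq \frac{t^2}{2} , \qquad \forall \, t \geq 0 .
\]
This follows by setting $u ( t ) := \eu^{t} ( t - 1 ) + 1 - t^2 / 2$, noting $u ( 0 ) = 0$, and computing $u' ( t ) = t ( \eu^{t} - 1 ) \geq 0$ on $[ 0, + \infty )$. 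In particular, taking $t = \Delta \alpha$ gives both $\chi ( \alpha ) \geq ( \Delta \alpha )^2 / 2 > 0$ for $\alpha > 0$ and $\chi ( \alpha ) / \alpha^2 \geq \Delta^2 / 2$.

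Since $\chi > 0$ on $( 0, + \infty )$, the quotient in Proposition~\ref{prop_bertsekas} is well defined, and its monotonicity yields $D ( \rho ( \alpha ), \rho ) / \chi ( \alpha ) \geq D ( \rho ( \bar{\alpha} ), \rho ) / \chi ( \bar{\alpha} )$ for every $\alpha \in ( 0, \bar{\alpha} ]$ (here we use $\alpha \leq \bar{\alpha}$ and that both lie in $( 0, + \infty )$). Dividing this inequality by $\alpha^2$ and inserting the bound $\chi ( \alpha ) / \alpha^2 \geq \Delta^2 / 2$ then gives
\[
\frac{ D ( \rho ( \alpha ), \rho ) }{ \alpha^2 } = \frac{ \chi ( \alpha ) }{ \alpha^2 } \cdot \frac{ D ( \rho ( \alpha ), \rho ) }{ \chi ( \alpha ) } \geq \frac{ \Delta^2 }{ 2 } \cdot \frac{ D ( \rho ( \bar{\alpha} ), \rho ) }{ \eu^{\Delta \bar{\alpha}} ( \Delta \bar{\alpha} - 1 ) + 1 } = \kappa \, D ( \rho ( \bar{\alpha} ), \rho ) ,
\]
which is exactly the claimed bound.

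I do not anticipate any genuine difficulty: Proposition~\ref{prop_bertsekas} carries all the analytic weight, and the remaining work is the one-line inequality $\eu^{t} ( t - 1 ) + 1 \geq t^2 / 2$ together with the substitution $t = \Delta \alpha$. The only point requiring the slightest care is verifying that $\chi$ is positive on $( 0, + \infty )$ so that the quotient is well defined, and this is subsumed by the same scalar estimate; one could alternatively note $\chi ( 0 ) = 0$ and $\chi' ( \alpha ) = \Delta^2 \alpha \eu^{\Delta \alpha} > 0$.
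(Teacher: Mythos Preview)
Your proposal is correct and follows essentially the same route as the paper: prove the elementary scalar inequality $\eu^{t}(t-1)+1\geq t^{2}/2$ via a first-derivative argument, substitute $t=\Delta\alpha$, and combine with the monotonicity from Proposition~\ref{prop_bertsekas}. The only cosmetic difference is that the paper works directly with $g(\alpha):=\chi(\alpha)-(\Delta^{2}/2)\alpha^{2}$ rather than making the substitution $t=\Delta\alpha$ first.
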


\begin{proof}
Define $g ( \alpha ) := \eu^{\Delta \alpha} ( \Delta \alpha - 1 ) + 1 - ( \Delta^2 / 2 ) \alpha^2$. 
Then $g ( 0 ) = 0$, and 
\begin{equation}
g' ( \alpha ) = \alpha \left[ \eu^{\Delta \alpha} \Delta^2 - \Delta^2 \right] \geq \alpha ( \Delta^2 - \Delta^2 ) = 0, \quad \forall \alpha \in ( 0, + \infty ) . \notag
\end{equation}
Therefore, $g ( \alpha ) \geq 0$ on $( 0, + \infty )$, i.e., 
\begin{equation}
\eu^{\Delta \alpha} ( \Delta \alpha - 1 ) + 1 \geq \frac{\Delta^2}{2} \alpha^2 , \quad \forall \alpha \in ( 0, + \infty ) . \notag 
\end{equation}
By Proposition \ref{prop_bertsekas}, we write
\begin{equation}
\frac{D ( \rho ( \alpha ), \rho )}{\frac{\Delta^2}{2} \alpha^2} \geq \frac{D ( \rho ( \alpha ), \rho )}{ \eu^{\Delta \alpha} ( \Delta \alpha - 1 ) + 1 } \geq \frac{D ( \rho ( \bar{ \alpha } ), \rho )}{ \eu^{\Delta \bar{\alpha}} ( \Delta \bar{\alpha} - 1 ) + 1 }, \quad \forall \alpha \in ( 0, \bar{\alpha} ] . \iftoggle{qqed}{\tag*{\qed}}{\notag}
\end{equation}
\end{proof}

\subsection{Proof of Statement 1}

The first statement is a direct consequence of the following proposition. 

\begin{proposition} \label{prop_armijo_terminates}
For every non-singular $\rho \in \mathcal{D}$, there exists some $\alpha_\rho > 0$ such that 
\begin{equation}
f ( \rho ( \alpha ) ) \leq f ( \rho ) + \tau \braket{ \nabla f ( \rho ), \rho ( \alpha ) - \rho } , \quad \forall \alpha \in [ 0, \alpha_\rho ] . \label{eq_armijo_to_be_checked}
\end{equation}
\end{proposition}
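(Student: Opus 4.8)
The plan is to view \eqref{eq_armijo_to_be_checked} as a first-order statement: since $\tau<1$ it must hold once $\alpha$ is small, and the only delicate point is that the guaranteed progress has to dominate the first-order remainder of $f$ \emph{uniformly} as $\alpha\downarrow0$. Fix a non-singular $\rho\in\mathcal{D}$, fix any $\bar{\alpha}>0$, write $\rho(\alpha)$ and $\nabla f(\rho)$ as in the statement, and put $\psi(\alpha):=\braket{\nabla f(\rho),\rho-\rho(\alpha)}$; since $\rho(0)=\rho$, \eqref{eq_armijo_to_be_checked} is equivalent to $f(\rho(\alpha))-f(\rho)\le-\tau\,\psi(\alpha)$ and is trivial at $\alpha=0$. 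If $\Delta:=\lambda_{\max}(\nabla f(\rho))-\lambda_{\min}(\nabla f(\rho))=0$, then $\nabla f(\rho)$ is a scalar multiple of the identity, $\rho(\alpha)\equiv\rho$, and the claim is immediate (take $\alpha_\rho=\bar{\alpha}$); so I assume $\Delta>0$ and seek $\alpha_\rho\in(0,\bar{\alpha}]$.

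I would then establish two one-sided estimates on $(0,\bar{\alpha}]$. \textbf{Linear lower bound on $\psi$.} Comparing the minimized objective in the variational description of $\rho(\alpha)$ (the analogue of~\eqref{eq_eg_equivalent} with base point $\rho$ and step size $\alpha$; equivalently, Lemma~\ref{lem_divergence} with convexity of the log-partition function) at the minimizer $\sigma=\rho(\alpha)$ and at the feasible point $\sigma=\rho$ gives $D(\rho(\alpha),\rho)\le\alpha\,\psi(\alpha)$, whence $\psi(\alpha)\ge\alpha^{-1}D(\rho(\alpha),\rho)\ge0$. Since $\Delta>0$ one has $D(\rho(\bar{\alpha}),\rho)>0$ (else $\rho(\bar{\alpha})=\rho$, again forcing $\nabla f(\rho)$ proportional to the identity), so Corollary~\ref{cor_bertsekas} yields $D(\rho(\alpha),\rho)\ge\kappa\,\alpha^2\,D(\rho(\bar{\alpha}),\rho)$, and therefore $\psi(\alpha)\ge c_1\,\alpha$ with $c_1:=\kappa\,D(\rho(\bar{\alpha}),\rho)>0$. \textbf{Linear upper bound on the displacement.} Feeding $D(\rho(\alpha),\rho)\le\alpha\,\psi(\alpha)$ into Pinsker's inequality and combining with $\psi(\alpha)\le\norm{\nabla f(\rho)}_\infty\norm{\rho(\alpha)-\rho}_1$ (duality of the Schatten $\infty$- and $1$-norms) gives $\tfrac12\norm{\rho(\alpha)-\rho}_1^2\le\alpha\,\norm{\nabla f(\rho)}_\infty\norm{\rho(\alpha)-\rho}_1$, hence $\norm{\rho(\alpha)-\rho}_1\le2\,G\,\alpha$ where $G:=\norm{\nabla f(\rho)}_\infty$ is finite and, as $\Delta>0$, nonzero; in particular $\rho(\alpha)\to\rho$ as $\alpha\downarrow0$.

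To finish, I would invoke differentiability of $f$ at $\rho$: along the path $\rho(\alpha)$, $f(\rho(\alpha))-f(\rho)=\braket{\nabla f(\rho),\rho(\alpha)-\rho}+o(\norm{\rho(\alpha)-\rho}_1)=-\psi(\alpha)+o(\norm{\rho(\alpha)-\rho}_1)$ as $\alpha\downarrow0$. Setting $\varepsilon:=(1-\tau)\,c_1/(2\,G)>0$, picking $\delta>0$ with $\abs{o(t)}\le\varepsilon t$ for $t\in[0,\delta]$, and taking $\alpha_\rho:=\min\{\bar{\alpha},\,\delta/(2\,G)\}$, one gets for every $\alpha\in(0,\alpha_\rho]$ that $t:=\norm{\rho(\alpha)-\rho}_1\le2\,G\,\alpha\le\delta$, so the remainder is at most $\varepsilon t\le(1-\tau)\,c_1\,\alpha\le(1-\tau)\,\psi(\alpha)$; rearranging gives $f(\rho(\alpha))-f(\rho)\le-\tau\,\psi(\alpha)$, i.e.\ \eqref{eq_armijo_to_be_checked}. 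The crux is not any single estimate but their interplay: near a (near-)stationary $\rho$ the term $\psi(\alpha)$ is tiny, so one cannot bound the remainder by a fixed fraction of a fixed quantity; it is precisely that $\psi(\alpha)$ decays \emph{no faster} than linearly in $\alpha$ (Corollary~\ref{cor_bertsekas} plus the descent inequality) while the displacement---hence the remainder---decays \emph{at least} linearly (Pinsker) that keeps the ratio bounded and lets a single $\alpha_\rho$ do the job.
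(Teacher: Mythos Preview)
Your proof is correct and follows essentially the same route as the paper: the case split on $\Delta=0$, the descent inequality $\psi(\alpha)\ge\alpha^{-1}D(\rho(\alpha),\rho)$ (Lemma~\ref{lem_inner_product}), Corollary~\ref{cor_bertsekas} to get the crucial $\alpha^{-2}$ lower bound on $D(\rho(\alpha),\rho)$, Pinsker to control the displacement, and continuity of $\nabla f$ to close. The only organizational difference is that the paper uses the mean-value theorem and cancels a common factor $\sqrt{D(\rho(\alpha),\rho)}$ from both sides, whereas you derive the explicit linear bounds $\psi(\alpha)\ge c_1\alpha$ and $\norm{\rho(\alpha)-\rho}_1\le 2G\alpha$ and feed them into the little-$o$ definition of differentiability; both are equivalent ways to package the same estimate.
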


Recall that $\tau$ is the parameter in Armijo line search. 

\begin{proof}
If $\rho$ is a minimizer, by Lemma \ref{lem_fixed_point}, we have $\rho ( \alpha ) = \rho$ for all $\alpha \in [ 0, + \infty )$, and the proposition follows. 
Suppose that $\rho$ is not a minimizer in the rest of this proof.
By Lemma \ref{lem_fixed_point}, we have $D ( \rho ( \alpha ), \rho ) > 0$ for all $\alpha \in ( 0, + \infty )$. 
By the mean-value theorem, we write
\begin{equation}
f ( \rho ( \alpha ) ) - f ( \rho ) = \braket{ \nabla f ( \sigma ), \rho ( \alpha ) - \rho } ,  \notag 
\end{equation}
for some $\sigma$ in the line segment joining $\rho ( \alpha )$ and $\rho$. 
Then \eqref{eq_armijo_to_be_checked} can be equivalently written as
\begin{equation}
\braket{ \nabla f ( \sigma ) - \nabla f ( \rho ), \rho ( \alpha ) - \rho } \leq - ( 1 - \tau ) \braket{ \nabla f ( \rho ), \rho ( \alpha ) - \rho }, \quad \forall \alpha \in [ 0, \alpha_\rho ] . \label{eq_armijo_to_be_checked_ver2}
\end{equation}
By Lemma \ref{lem_inner_product}, \eqref{eq_armijo_to_be_checked_ver2} holds if
\begin{equation}
\braket{ \nabla f ( \sigma ) - \nabla f ( \rho ), \rho ( \alpha ) - \rho } \leq \frac{ ( 1 - \tau ) D ( \rho ( \alpha ), \rho ) }{ \alpha } , \quad \forall \alpha \in [ 0, \alpha_\rho ] . \label{eq_armijo_to_be_checked_ver3}
\end{equation}
Consider two cases. 
\begin{itemize}
\item If $\lambda_{\max} ( \nabla f ( \rho ) ) = \lambda_{\min} ( \nabla f ( \rho ) )$, then $\nabla f ( \rho )$ is a multiple of the identity. 
We have 
\begin{equation}
\braket{ \nabla f ( \rho ), \sigma - \rho } = 0 , \quad \forall \sigma \in \mathcal{D} ; \notag
\end{equation}
showing that $\rho$ is a minimizer. 
By Lemma \ref{lem_fixed_point}, the proposition follows for every $\alpha_\rho > 0$. 

\item Otherwise, set $\alpha_\rho \leq \bar{\alpha}$. 
By Corollary \ref{cor_bertsekas}, there exists some $\kappa > 0$, such that 
\begin{equation}
\frac{D ( \rho ( \alpha ), \rho )}{\alpha} \geq \sqrt{ D ( \rho ( \alpha ), \rho ) } \sqrt{ \kappa D ( \rho ( \bar{\alpha} ), \rho ) }, \quad \forall \alpha \in [ 0, \alpha_\rho ] . \notag
\end{equation}
Applying H\"{o}lder's inequality and Pinsker's inequality, we write
\begin{align}
\braket{ \nabla f ( \sigma ) - \nabla f ( \rho ), \rho ( \alpha ) - \rho } &\leq \norm{ \nabla f ( \sigma ) - \nabla f ( \rho ) }_\infty \norm{ \rho ( \alpha ) - \rho }_1 \notag \\
& \leq \norm{ \nabla f ( \sigma ) - \nabla f ( \rho ) }_\infty \sqrt{ 2 D ( \rho ( \alpha ), \rho ) } . \notag
\end{align}
Then \eqref{eq_armijo_to_be_checked_ver3} holds if
\begin{equation}
\norm{ \nabla f ( \sigma ) - \nabla f ( \rho ) }_\infty \sqrt{ 2 } \leq ( 1 - \tau ) \sqrt{ \kappa D ( \rho ( \bar{\alpha} ), \rho ) } , \quad \forall \alpha \in [ 0, \alpha_\rho ] \notag
\end{equation}
Recall that a convex differentiable function is continuously differentaible \cite{Rockafellar1970b}. 
Notice that $\rho ( \alpha )$ is continuous in $\alpha$. 
As the right-hand side is a strictly positive constant by Lemma \ref{lem_fixed_point}, the proposition follows for a small enough $\alpha_\rho$. \iftoggle{qqed}{\qed}{}
\end{itemize}
\end{proof}

\subsection{Proof of Statement 2}

By the definition of Armijo line search and Lemma \ref{lem_inner_product}, we have
\begin{equation}
f ( \rho_k ) \leq f ( \rho_{k - 1} ) + \tau \braket{ \nabla f ( \rho_{k - 1} ), \rho_k - \rho_{k - 1} } \leq f ( \rho_{k - 1} ) - \frac{\tau D ( \rho_k, \rho_{k - 1} )}{\alpha_k} . \notag
\end{equation}
As the quantum relative entropy $D$ is always non-negative, it follows that the sequence $( f ( \rho_k ) )_{k \in \mathbb{N}}$ is non-increasing. 

\subsection{Proof of Statement 3}

If $\rho_k$ is a minimizer for some $k \in \mathbb{N}$, by Lemma \ref{lem_fixed_point}, it holds that $\rho_{k'} = \rho_k$ for all $k' > k$, and the statement trivially follows. 
In the rest of this sub-section, we assume that $\rho_k$ is not a minimizer for all $k$; then by Lemma \ref{lem_fixed_point}, it holds that $\rho_k \neq \rho_{k - 1}$ for all $k \in \mathbb{N}$. 

Let $( \rho_k )_{k \in \mathcal{K}}$ be a sub-sequence converging to a limit point $\rho_\infty \in \mathcal{D}$, which exists due to the compactness of $\mathcal{D}$. 
Then $\rho_\infty$ must lie in $\inte \dom f$; otherwise, monotonicity of the sequence $( f ( \rho_k ) )_{k \in \mathbb{N}}$ (Statement 2 of Theorem \ref{thm_main}) cannot hold. 
As $f$ is continuously differentiable, it holds that 
\begin{align}
\frac{ \Delta_\infty }{ 2 } \leq \lambda_{\max} ( \nabla f ( \rho_k ) ) - \lambda_{\min} ( \nabla f ( \rho_k ) ) \leq 2 \Delta_\infty , \label{eq_gradient_limit}
\end{align}
for large enough $k \in \mathcal{K}$, where $\Delta_\infty := \lambda_{\max} ( \nabla f ( \rho_\infty ) ) - \lambda_{\min} ( \nabla f ( \rho_\infty )$. 

\begin{lemma} \label{lem_zero_delta}
% If $\Delta_\infty = 0$, then $D ( \rho_k ( \beta ), \rho_k ) \to 0$ as $k \to \infty$ in $\mathcal{K}$, for any $\beta \in [ 0, + \infty )$. 
If $\Delta_\infty = 0$, then $\liminf \set{ D ( \rho_k ( \beta ), \rho_k ) | k \in \mathcal{K} } = 0$ for every $\beta \in [ 0, + \infty )$. 
\end{lemma}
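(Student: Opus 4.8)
The goal is to show that when $\Delta_\infty = 0$ (so $\nabla f(\rho_\infty)$ is a multiple of the identity and $\rho_\infty$ is in fact a minimizer by the argument already used in the proof of Proposition \ref{prop_armijo_terminates}), the quantity $D(\rho_k(\beta), \rho_k)$ has a subsequence along $\mathcal{K}$ tending to zero, for every fixed $\beta \geq 0$. The natural approach is to bound $D(\rho_k(\beta), \rho_k)$ directly in terms of $\Delta_k := \lambda_{\max}(\nabla f(\rho_k)) - \lambda_{\min}(\nabla f(\rho_k))$, which tends to zero along $\mathcal{K}$ by \eqref{eq_gradient_limit}, and then let $k \to \infty$.

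First I would handle the degenerate case: if $\Delta_k = 0$ for infinitely many $k \in \mathcal{K}$, then for those $k$ the gradient $\nabla f(\rho_k)$ is a multiple of the identity, so $\log(\rho_k) - \beta\nabla f(\rho_k)$ differs from $\log \rho_k$ by a scalar, whence $\rho_k(\beta) = \rho_k$ and $D(\rho_k(\beta),\rho_k) = 0$; this already gives the $\liminf = 0$ conclusion. So I may restrict attention to the subsequence on which $\Delta_k > 0$. On that subsequence I would invoke the upper bound of Lemma \ref{lem_self-concordance_bound} together with Lemma \ref{lem_divergence}: writing $\varphi(\cdot)$ for $\varphi(\cdot;\rho_k)$,
\begin{equation}
D(\rho_k(\beta),\rho_k) = \varphi(0) - \left[\varphi(\beta) - \varphi'(\beta)(0-\beta)\right] \leq \frac{\eu^{\Delta_k \beta} - \Delta_k \beta - 1}{\Delta_k^2}\,\varphi''(\beta). \notag
\end{equation}
The factor $(\eu^{\Delta_k\beta} - \Delta_k\beta - 1)/\Delta_k^2$ tends to $\beta^2/2$ as $\Delta_k \to 0$, so it stays bounded; the work is therefore to control $\varphi''(\beta)$. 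By Lemma \ref{lem_self-concordance} and the representation in the proof of Lemma \ref{lem_self-concordance}, $\varphi''(\beta) = \mathsf{E}(\eta_\beta - \mathsf{E}\,\eta_\beta)^2$ where $\eta_\beta$ takes values in an interval of length $\Delta_k$, so the variance is at most $\Delta_k^2/4$. Substituting, $D(\rho_k(\beta),\rho_k) \leq \tfrac14(\eu^{\Delta_k\beta} - \Delta_k\beta - 1)$, and letting $k \to \infty$ along $\mathcal{K}$ with $\Delta_k \to 0$ forces the right-hand side to $0$, giving $\lim_{k\in\mathcal{K}} D(\rho_k(\beta),\rho_k) = 0$, which is stronger than the claimed $\liminf = 0$. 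The case $\beta = 0$ is trivial since $\rho_k(0) = \rho_k$.

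The main obstacle I anticipate is making sure the variance bound $\varphi''(\beta) \leq \Delta_k^2/4$ is genuinely available from what the paper has set up — i.e., that Lemma \ref{lem_moments} (deferred to the appendix) indeed expresses $\varphi''$ as the variance of a random variable supported on an interval of length exactly $\Delta_k$, so that Popoviciu's inequality on variances applies. If that exact support statement is not in hand, an alternative is to bound $\varphi''(\beta)$ crudely by noting it is continuous in $(\alpha,\rho)$ jointly near $(\beta,\rho_\infty)$ and at $\rho_\infty$ equals the variance of a point mass (since $\Delta_\infty = 0$), hence $\varphi''(\beta;\rho_k) \to 0$; combined with the bounded prefactor this again yields the conclusion. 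Either route is routine once the representation of $\varphi''$ is pinned down, so I expect the lemma to be short.
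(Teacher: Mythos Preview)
Your proposal is correct and essentially identical to the paper's proof: the paper invokes Lemma~\ref{lem_divergence}, the upper bound of Lemma~\ref{lem_self-concordance_bound}, and then bounds $\varphi_k''(\beta)/\Delta_k^2 \leq 1/4$ via Corollary~\ref{cor_bounded_hessian} (which is exactly the Popoviciu variance bound you describe), arriving at $D(\rho_k(\beta),\rho_k) \leq \tfrac{1}{4}(\eu^{\Delta_k\beta} - \Delta_k\beta - 1) \to 0$. The ingredient you were unsure about is thus already packaged in the paper as Corollary~\ref{cor_bounded_hessian}.
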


\begin{proof}
Define $\Delta_k := \lambda_{\max} ( \nabla f ( \rho_k ) ) - \lambda_{\min} ( \nabla f ( \rho_k ) )$; then $\Delta_k \to \Delta_\infty = 0$. 
Define $\varphi_k : \alpha \mapsto \varphi ( \alpha; \rho_k )$. 
% Then $\varphi_k$ is $2 \Delta_\infty$-self-concordant like for large enough $k$ in $\mathcal{K}$. 
By Lemma \ref{lem_self-concordance_bound} and Corollary \ref{cor_bounded_hessian}, we have
\begin{align}
\varphi_k ( 0 ) - \left[ \varphi_k ( \beta ) - \varphi_k' ( \beta ) ( 0 - \beta ) \right] & \leq \frac{ \left( \eu^{\Delta_k \beta} - \Delta_k \beta - 1 \right) }{ \Delta_k^2 } \varphi_k'' ( \beta ) \notag \\
& \leq \frac{ \left( \eu^{\Delta_k \beta} - \Delta_k \beta - 1 \right) }{ 4 }. \notag
\end{align}
By Lemma \ref{lem_divergence}, we obtain
\begin{align}
% 0 \leq \lim_{k \to \infty} D ( \rho_k ( \beta ), \rho_k ) = \lim_{k \to \infty} \varphi_k ( 0 ) - \left[ \varphi_k ( \beta ) - \varphi_k' ( \beta ) ( 0 - \beta ) \right] \leq \frac{\eu^{0} - 0 - 1}{4} = 0 . \notag
0 & \leq \liminf \set{ D ( \rho_k ( \beta ), \rho_k ) | k \in \mathcal{K} } \notag \\
& = \liminf \set{ \varphi_k ( 0 ) - \left[ \varphi_k ( \beta ) - \varphi_k' ( \beta ) ( 0 - \beta ) \right] | k \in \mathcal{K} } \notag \\
& \leq \frac{\eu^{0} - 0 - 1}{4} = 0 . \iftoggle{qqed}{\tag*{\qed}}{\notag}
\end{align}
\end{proof}

Suppose that $\Delta_\infty > 0$. 
We have the following analogy of Corollary \ref{cor_bertsekas} for large enough $k \in \mathcal{K}$: 

\begin{corollary} \label{cor_bertsekas_asymptotic}
Suppose that $\Delta_\infty > 0$ and $\rho_k$ is not a minimizer for every $k \in \mathcal{K}$. 
There exists some $\kappa > 0$, such that 
\begin{equation}
\frac{D ( \rho_k ( \alpha ), \rho_k )}{\alpha^2} \geq \kappa D ( \rho_k ( \bar{\alpha} ), \rho_k ) , \quad \forall \alpha \in ( 0, \bar{\alpha} ] , \notag
\end{equation}
for large enough $k \in \mathcal{K}$. 
\end{corollary}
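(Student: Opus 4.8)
The plan is to obtain the claim as a \emph{uniform} version of Corollary \ref{cor_bertsekas}, applied separately to each iterate $\rho_k$ with $k \in \mathcal{K}$ large enough.

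First I would observe that every iterate $\rho_k$ is non-singular: the exponentiated gradient update applies the matrix exponential, whose value is always positive definite, so non-singularity propagates from the non-singular starting point $\rho_0$. Together with \eqref{eq_gradient_limit}, which gives $\Delta_\infty / 2 \le \Delta_k \le 2 \Delta_\infty$ for all large enough $k \in \mathcal{K}$, where $\Delta_k := \lambda_{\max} ( \nabla f ( \rho_k ) ) - \lambda_{\min} ( \nabla f ( \rho_k ) )$, and using $\Delta_\infty > 0$, the hypotheses of Corollary \ref{cor_bertsekas} are met for $\rho = \rho_k$. This yields, for such $k$,
\begin{equation*}
\frac{ D ( \rho_k ( \alpha ), \rho_k ) }{ \alpha^2 } \ge \kappa_k \, D ( \rho_k ( \bar{\alpha} ), \rho_k ), \quad \forall \alpha \in ( 0, \bar{\alpha} ] , \qquad \kappa_k := \Delta_k^2 \bigl\{ 2 [ \eu^{ \Delta_k \bar{\alpha} } ( \Delta_k \bar{\alpha} - 1 ) + 1 ] \bigr\}^{-1} .
\end{equation*}

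Next I would make the constant uniform in $k$. Define $\psi ( \delta ) := \delta^2 \bigl\{ 2 [ \eu^{ \delta \bar{\alpha} } ( \delta \bar{\alpha} - 1 ) + 1 ] \bigr\}^{-1}$. The computation already carried out in the proof of Corollary \ref{cor_bertsekas} shows $\eu^x ( x - 1 ) + 1 > 0$ for every $x > 0$, so $\psi$ is continuous and strictly positive on $( 0, + \infty )$. Since $[ \Delta_\infty / 2, 2 \Delta_\infty ]$ is a compact subset of $( 0, + \infty )$ --- this is exactly where $\Delta_\infty > 0$ is used --- $\psi$ attains a strictly positive minimum $\kappa$ on it. As $\kappa_k = \psi ( \Delta_k )$ and $\Delta_k \in [ \Delta_\infty / 2, 2 \Delta_\infty ]$ for large $k \in \mathcal{K}$, we get $\kappa_k \ge \kappa > 0$, and substituting this into the displayed inequality finishes the proof.

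I do not anticipate a genuine obstacle here: the argument is ``Corollary \ref{cor_bertsekas} plus a compactness/continuity estimate for the constant''. The only points needing a little care are the strict positivity of the denominator $\eu^{ \delta \bar{\alpha} } ( \delta \bar{\alpha} - 1 ) + 1$ on $( 0, + \infty )$, so that $\psi$ is well behaved, and the fact that the interval over which one minimizes $\psi$ must stay bounded away from $0$; both are supplied by $\Delta_\infty > 0$ in combination with \eqref{eq_gradient_limit}.
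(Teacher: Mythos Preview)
Your argument is correct and follows essentially the same route as the paper: apply Corollary~\ref{cor_bertsekas} to each $\rho_k$ and then use the two-sided bound \eqref{eq_gradient_limit} on $\Delta_k$ to make the constant uniform. The only cosmetic difference is that the paper writes down an explicit value $\kappa = \Delta_\infty^2 \bigl\{ 4 [ \eu^{2 \Delta_\infty \bar{\alpha}} ( 2 \Delta_\infty \bar{\alpha} - 1 ) + 1 ] \bigr\}^{-1}$ by bounding numerator and denominator separately, whereas you appeal to continuity of $\psi$ on the compact interval $[\Delta_\infty/2, 2\Delta_\infty]$; both yield the same conclusion.
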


\begin{proof}
Recall that \eqref{eq_gradient_limit} provides both upper and lower bounds of $\lambda_{\max} ( \nabla f ( \rho_k ) ) - \lambda_{\min} ( \nabla f ( \rho_k ) )$, for large enough $k \in \mathcal{K}$. 
With regard to Corollary \ref{cor_bertsekas}, it suffices to set
\begin{equation}
\kappa = \frac{\Delta_\infty^2}{4 \left[ \eu^{2 \Delta_\infty \bar{\alpha}} ( 2 \Delta_\infty \bar{\alpha} - 1 ) + 1 \right]} . \iftoggle{qqed}{\tag*{\qed}}{\notag}
\end{equation}
\end{proof}

Based on Corollary \ref{cor_bertsekas_asymptotic}, we prove the following proposition. 

\begin{proposition} \label{prop_convergence_suppl}
Suppose that $\Delta_\infty > 0$ and $\rho_k$ is not a minimizer for every $k \in \mathcal{K}$. 
It holds that $\liminf \set{ D ( \rho_k ( \bar{\alpha} ), \rho_k ) | k \in \mathcal{K} } = 0$. 
\end{proposition}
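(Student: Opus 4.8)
The plan is to argue by contradiction. Suppose $\liminf \set{ D ( \rho_k ( \bar{\alpha} ), \rho_k ) | k \in \mathcal{K} } = \delta > 0$. Then by passing to a further sub-sequence (still denoted $\mathcal{K}$) we may assume $D ( \rho_k ( \bar{\alpha} ), \rho_k ) \geq \delta / 2$ for all large $k \in \mathcal{K}$. The goal is to combine this lower bound with Corollary \ref{cor_bertsekas_asymptotic} to show that the Armijo step size $\alpha_{k+1}$ is bounded away from $0$ along $\mathcal{K}$, and hence that the per-iteration decrease in $f$ is bounded away from $0$, contradicting the fact that $( f ( \rho_k ) )_{k \in \mathbb{N}}$ is non-increasing and bounded below (Statement 2 of Theorem \ref{thm_main}, together with $f^\star > -\infty$).

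The key steps, in order. First, I would invoke Corollary \ref{cor_bertsekas_asymptotic} to get a $\kappa > 0$ with $D ( \rho_k ( \alpha ), \rho_k ) / \alpha^2 \geq \kappa D ( \rho_k ( \bar{\alpha} ), \rho_k ) \geq \kappa \delta / 2$ for all $\alpha \in ( 0, \bar{\alpha} ]$ and all large $k \in \mathcal{K}$; that is, $D ( \rho_k ( \alpha ), \rho_k ) \geq ( \kappa \delta / 2 ) \alpha^2$ uniformly. Second, I would reuse the machinery from the proof of Proposition \ref{prop_armijo_terminates}: by the mean-value theorem and Lemma \ref{lem_inner_product}, the Armijo condition at $\rho_k$ with step $\alpha$ holds whenever $\braket{ \nabla f ( \sigma ) - \nabla f ( \rho_k ), \rho_k ( \alpha ) - \rho_k } \leq ( 1 - \tau ) D ( \rho_k ( \alpha ), \rho_k ) / \alpha$, and by Hölder plus Pinsker the left side is at most $\norm{ \nabla f ( \sigma ) - \nabla f ( \rho_k ) }_\infty \sqrt{ 2 D ( \rho_k ( \alpha ), \rho_k ) }$. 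Substituting the lower bound on $D ( \rho_k ( \alpha ), \rho_k )$, the Armijo condition is implied by $\norm{ \nabla f ( \sigma ) - \nabla f ( \rho_k ) }_\infty \sqrt{ 2 } \leq ( 1 - \tau ) \sqrt{ \kappa \delta / 2 }$. Third, I would argue this last inequality holds for all $\alpha$ in a fixed interval $[ 0, \alpha^\ast ]$ independent of $k$: since $\rho_k \to \rho_\infty$ and $\rho_k ( \alpha )$ is jointly continuous in $( \alpha, \rho_k )$ with $\rho_k ( 0 ) = \rho_k$, the matrix $\sigma$ on the segment between $\rho_k$ and $\rho_k ( \alpha )$ stays in a small neighborhood of $\rho_\infty$ for $\alpha$ small and $k$ large, so continuity of $\nabla f$ makes $\norm{ \nabla f ( \sigma ) - \nabla f ( \rho_k ) }_\infty$ arbitrarily small, uniformly. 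Hence $\alpha_{k+1} \geq \min \set{ \bar{\alpha}, r \alpha^\ast }$ for large $k \in \mathcal{K}$. Fourth, I would close the loop: by Statement 2's decrease estimate, $f ( \rho_k ) - f ( \rho_{k+1} ) \geq \tau D ( \rho_{k+1}, \rho_k ) / \alpha_{k+1} \geq \tau \kappa \alpha_{k+1}^2 D ( \rho_k ( \bar\alpha ), \rho_k ) / \alpha_{k+1} = \tau \kappa \alpha_{k+1} D ( \rho_k ( \bar\alpha ), \rho_k ) \geq \tau \kappa \min \set{ \bar\alpha, r\alpha^\ast } ( \delta / 2 )$, a fixed positive constant, for infinitely many $k$; summing over $k \in \mathcal{K}$ forces $f ( \rho_k ) \to - \infty$, contradicting $f^\star > -\infty$.

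The main obstacle is the third step: making the bound on $\norm{ \nabla f ( \sigma ) - \nabla f ( \rho_k ) }_\infty$ \emph{uniform in $k$}. One must be careful that the "small $\alpha$" threshold does not shrink as $k$ grows. This works because $\set{ \rho_k ( \alpha ) : \alpha \in [0, \bar\alpha], k \in \mathcal{K} \text{ large} }$ together with their limits lies in a compact subset of $\inte \dom f$ (using that $\rho_\infty \in \inte \dom f$, as already argued before Lemma \ref{lem_zero_delta}), on which $\nabla f$ is uniformly continuous; combined with the uniform modulus-of-continuity statement $\norm{ \rho_k ( \alpha ) - \rho_k }_1 \to 0$ as $\alpha \to 0$ uniformly in $k$ (which itself follows from joint continuity on the compact parameter range and $\rho_k ( 0 ) = \rho_k$), this gives the required uniform smallness. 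Everything else is a routine recombination of Corollary \ref{cor_bertsekas_asymptotic}, Lemma \ref{lem_inner_product}, Pinsker's inequality, and the monotonicity from Statement 2.
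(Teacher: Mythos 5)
Your proposal is correct in substance and uses the same ingredients as the paper (Corollary \ref{cor_bertsekas_asymptotic}, Lemma \ref{lem_inner_product}, the mean value theorem, H\"{o}lder plus Pinsker, and the Armijo sufficient-decrease inequality), but it is organized differently. The paper gives a direct two-case argument on $\underline{\alpha} := \liminf \set{ \alpha_k | k \in \mathcal{K} }$: if $\underline{\alpha} > 0$, telescoping the Armijo decrease forces $D ( \rho_k ( \bar{\alpha} ), \rho_k ) \to 0$; if $\underline{\alpha} = 0$, it works with the \emph{rejected} step $r^{-1} \alpha_k$ along a single sub-sequence where $\alpha_k \to 0$ and takes limits only along that sub-sequence. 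You instead assume $\liminf D ( \rho_k ( \bar{\alpha} ), \rho_k ) = \delta > 0$, derive a $k$-independent lower bound on the accepted step sizes, hence a uniform per-iteration decrease, and reach a contradiction with $f^\star > - \infty$. The two arguments are essentially contrapositives of one another; your version buys a cleaner narrative ("steps and decreases are bounded below"), at the cost that your third step must be \emph{uniform in $k$}, which the paper's limit-along-a-sub-sequence device avoids needing.

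That third step is the one place where your justification needs repair. You appeal to joint continuity of $( \alpha, \rho ) \mapsto \rho ( \alpha )$ on the closure of $\set{ \rho_k | k \in \mathcal{K} }$; but that closure contains $\rho_\infty$, which may be a \emph{singular} density matrix --- the argument preceding Lemma \ref{lem_zero_delta} only places $\rho_\infty$ in $\inte \dom f$, not in the interior of $\mathcal{D}$ --- and the exponentiated-gradient map involves $\log \rho$ and need not extend continuously (let alone equicontinuously) to singular $\rho$. The fix uses tools already in the paper: by Lemma \ref{lem_divergence}, Lemma \ref{lem_self-concordance_bound}, Corollary \ref{cor_bounded_hessian}, and \eqref{eq_gradient_limit},
\[
D ( \rho_k ( \alpha ), \rho_k ) \leq \frac{ \eu^{\Delta_k \alpha} - \Delta_k \alpha - 1 }{ 4 } \leq \frac{ \eu^{2 \Delta_\infty \alpha} - 2 \Delta_\infty \alpha - 1 }{ 4 }
\]
for all large $k \in \mathcal{K}$, so Pinsker's inequality gives $\norm{ \rho_k ( \alpha ) - \rho_k }_1 = O ( \alpha )$ uniformly in large $k$. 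Combined with $\rho_k \to \rho_\infty$ and uniform continuity of $\nabla f$ on a compact ball around $\rho_\infty$ contained in $\inte \dom f$ (which also legitimizes the mean-value theorem on the relevant segments), this yields the required $k$-independent threshold $\alpha^\ast$. With that substitution your argument goes through.
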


The proof of Proposition \ref{prop_convergence_suppl} can be found in Section \ref{sec_convergence_suppl}, which essentially follows the strategy of Gafni and Bertsekas \cite{Gafni1982} with necessary modifications. 

To summarize, we have proved that for any converging sub-sequence $( \rho_k )_{k \in \mathcal{K}}$, there exists some $\gamma > 0$ such that
\begin{equation}
\liminf \set{ D ( \rho_k ( \gamma ), \rho_k ) | k \in \mathcal{K} } = 0 . \notag 
\end{equation}
For the case where $\rho_k$ is a minimizer for some $k \in \mathcal{K}$ or $\Delta_\infty = 0$, $\gamma$ can be any strictly positive real number. 
Otherwise, we set $\gamma = \bar{\alpha}$ by Proposition \ref{prop_convergence_suppl}. 

By Lemma \ref{lem_divergence} and Lemma \ref{lem_self-concordance_bound}, it holds that
\begin{align}
0 & \leq \liminf \Set{ \frac{ \left( \eu^{- ( 1 / 2 ) \Delta_\infty \gamma} + ( 1 / 2 ) \Delta_\infty \gamma - 1 \right) }{ \gamma^2 } \varphi_k'' ( \gamma ) | k \in \mathcal{K} } \notag \\
& \leq \liminf \set{ D ( \rho_k ( \gamma ), \rho_k ) | k \in \mathcal{K} } = 0 , \notag
\end{align}
showing that $\liminf \set{ \varphi_k'' ( \gamma ) | k \in \mathcal{K} } = 0$. 
Applying Lemma \ref{lem_divergence} and Lemma \ref{lem_self-concordance_bound} again, we obtain
\begin{align}
0 & \leq \liminf \set{ D ( \rho_k ( \beta ), \rho_k ) | k \in \mathcal{K} | k \in \mathcal{K} } \notag \\
& \leq \liminf \Set{ \frac{ \left( \eu^{ 2 \Delta_\infty \beta} - 2 \Delta_\infty \beta - 1 \right) }{ \beta^2 } \varphi_k'' ( \beta ) | k \in \mathcal{K} } = 0 , \notag
\end{align}
for any $\beta \in ( 0, + \infty )$.
This proves Statement 3 of Theorem \ref{thm_main}. 

\section{Concluding Remarks}

Assuming only differentiability of the objective function, we have proved that the EG method with Armijo line search monotonically converges to the optimum, if the sequence of iterates possesses a non-singular limit point. 
Our proof exploits the self-concordant likeness of the log-partition function, which is of independent interest; 
in particular, Lemma \ref{lem_self-concordance_bound} improves upon the Peierls-Bogoliubov inequality. 

\subsection{Importance of Self-Concordant Likeness}

With regard to \eqref{eq_bertsekas_original}, one may suspect whether it suffices, for the convergence analysis, to prove the following: 
There exists some $\epsilon > 0$, such that the mapping $\alpha \mapsto \alpha^{- \epsilon} D ( \rho ( \alpha ), \rho )$ is non-increasing on $( 0, \bar{\alpha} ]$ for every non-singular $\rho \in \mathcal{D}$. 
Indeed, following the proof strategy for Proposition \ref{prop_bertsekas}, we obtain the following result \emph{without self-concordant likeness}. 

\begin{proposition}
Let $\rho \in \mathcal{D}$ be non-singular. 
Define 
\begin{equation}
M := \sup \set{ \varphi'' ( \alpha; \rho ) | \alpha \in ( 0, \bar{\alpha} ) } , \quad m := \inf \set{ \varphi'' ( \alpha; \rho ) | \alpha \in ( 0, \bar{\alpha} ) } . \notag
\end{equation}
Suppose that $m > 0$. 
Then the mapping $\alpha \mapsto \alpha^{ - \epsilon} D ( \rho ( \alpha ), \rho )$ is non-increasing on $( 0, \bar{\alpha} )$, where $\epsilon := 2 M / m$. 
\end{proposition}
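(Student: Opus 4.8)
The plan is to mimic the proof of Proposition~\ref{prop_bertsekas}, replacing self-concordant likeness by the two-sided bound $m \leq \varphi''(\alpha;\rho) \leq M$. Write $\varphi(\alpha) := \varphi(\alpha;\rho)$, put $\chi(\alpha) := \alpha^{\epsilon}$, and set $g(\alpha) := \chi(\alpha)^{-1} D(\rho(\alpha),\rho)$. First I would dispose of the trivial case: if $\rho$ is a minimizer then $D(\rho(\alpha),\rho) = 0$ for all $\alpha$ by Lemma~\ref{lem_fixed_point}, so $g \equiv 0$ and there is nothing to prove. Assuming $\rho$ is not a minimizer, I would differentiate $g$ exactly as in the proof of Proposition~\ref{prop_bertsekas}, using Lemma~\ref{lem_divergence} in the form $\tfrac{\du}{\du\alpha} D(\rho(\alpha),\rho) = \alpha\varphi''(\alpha)$; this shows that $g' \leq 0$ on $(0,\bar\alpha)$ if and only if
\[
(\log\chi)'(\alpha) = \frac{\epsilon}{\alpha} \;\geq\; \frac{\alpha\,\varphi''(\alpha)}{D(\rho(\alpha),\rho)} , \qquad \forall\, \alpha \in (0,\bar\alpha),
\]
equivalently $\alpha^{2}\varphi''(\alpha) \leq \epsilon\, D(\rho(\alpha),\rho)$ on $(0,\bar\alpha)$.

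It then remains to estimate both sides, which is where the hypotheses enter. For the numerator, $\varphi''(\alpha) \leq M$ gives $\alpha^{2}\varphi''(\alpha) \leq M\alpha^{2}$. For the denominator, I would integrate the same identity $\tfrac{\du}{\du t} D(\rho(t),\rho) = t\varphi''(t)$ from $0$ to $\alpha$, using $D(\rho(0),\rho) = 0$, to get
\[
D(\rho(\alpha),\rho) \;=\; \int_0^\alpha t\,\varphi''(t)\,\du t \;\geq\; m\int_0^\alpha t\,\du t \;=\; \frac{m}{2}\,\alpha^{2} .
\]
In particular $m > 0$ forces $D(\rho(\alpha),\rho) > 0$ for $\alpha \in (0,\bar\alpha)$, so the division above is legitimate. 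Combining, $\alpha^{2}\varphi''(\alpha) \leq M\alpha^{2} = \tfrac{2M}{m}\cdot\tfrac{m}{2}\alpha^{2} \leq \epsilon\, D(\rho(\alpha),\rho)$, which is precisely the inequality needed; hence $g$ is non-increasing on $(0,\bar\alpha)$.

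I do not expect any real obstacle in the proof: the single identity $\tfrac{\du}{\du\alpha}D(\rho(\alpha),\rho) = \alpha\varphi''(\alpha)$ (a restatement of Lemma~\ref{lem_divergence}, obtained from $\varphi(0) = 0$, $\varphi'(\alpha) = -\tr(\rho(\alpha)\nabla f(\rho))$, and the normalization $\log\rho(\alpha) = \log\rho - \alpha\nabla f(\rho) - \varphi(\alpha) I$) simultaneously furnishes the numerator of $g'$ and, after integration, the quadratic lower bound on $D(\rho(\alpha),\rho)$. The only point worth flagging is interpretive rather than technical: unlike the exponent implicitly produced by Proposition~\ref{prop_bertsekas}, the exponent $\epsilon = 2M/m$ here depends on $\rho$ through a ratio that blows up as $\rho$ approaches the boundary of $\mathcal{D}$, so this weaker result is not by itself enough to drive the convergence analysis --- which is exactly the role self-concordant likeness plays.
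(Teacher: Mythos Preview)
Your proposal is correct and follows precisely the route the paper indicates (``following the proof strategy for Proposition~\ref{prop_bertsekas}''): you set up $g(\alpha)=D(\rho(\alpha),\rho)/\chi(\alpha)$, reduce $g'\le 0$ to $(\log\chi)'(\alpha)\ge \alpha\varphi''(\alpha)/D(\rho(\alpha),\rho)$, and then replace the self-concordance lower bound of Lemma~\ref{lem_self-concordance_bound} by the cruder bound $D(\rho(\alpha),\rho)=\int_0^\alpha t\varphi''(t)\,\du t\ge \tfrac{m}{2}\alpha^2$, together with $\varphi''\le M$ in the numerator. Your closing remark about why $\epsilon=2M/m\ge 2$ makes this insufficient for the convergence analysis also matches the paper's discussion.
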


\begin{remark}
For the case where $m = 0$, Lemma \ref{lem_moments} implies that $\nabla f$ must be a multiple of the identity. 
Then it is easily checked that $\rho$ is a minimizer as it verifies the optimality condition. 
\end{remark}

Then in the proof of Proposition \ref{prop_armijo_terminates}, for example, the condition we need to verify becomes: 
\begin{equation}
\norm{ \nabla f ( \sigma ) - \nabla f ( \rho ) }_\infty \sqrt{ 2 } \leq ( 1 - \tau ) \alpha^{\epsilon / 2 - 1} \sqrt{ \frac{ D ( \rho ( \bar{\alpha} ), \rho ) }{ \bar{\alpha}^2 } } , \quad \forall \alpha \in [ 0, \alpha_\rho ] . \notag
\end{equation}
Notice that $\epsilon \geq 2$ by definition. 
Both sides can converge to zero as $\alpha \to 0$, so in general, there does not exist a small enough $\alpha_\rho$ that verifies the condition. 
Moreover, because $\alpha^\epsilon \leq \alpha^2$ for $\alpha \in [ 0, 1 ]$, it is impossible to obtain an analogue of Corollary \ref{cor_bertsekas}. 

The point in our analysis is to show that there exists some $\chi ( \alpha )$, bounded from below by $\alpha^2$ for every $\alpha$ close to zero, such that the mapping $\alpha \mapsto D ( \rho ( \alpha ), \rho ) / \chi ( \alpha )$ is non-increasing. 
This is where self-concordant likeness of the log-partition function comes into play. 

\subsection{Extensions for the Probability Simplex and Spectrahedron Constraints} \label{sec_extensions}

The EG method can be extended for the spectrahedron and probability simplex constraints; in fact, the EG method is arguably better known for these two cases \cite{Auslender2006,Beck2003,Kivinen1997,Tsuda2005}. 
For the former case, the iteration rule writes exactly the same as \eqref{eq_eg}, and is equivalent to \eqref{eq_eg_equivalent} with $\mathcal{D}$ replaced by the spectrahedron $\mathcal{S}$. 
For the latter case, the iteration rule becomes element-wise (see, e.g., \cite{Beck2003}) and is equivalent to \eqref{eq_eg_equivalent}, with $\mathcal{D}$ replaced by the probability simplex $\mathcal{P}$, and the quantum relative entropy replaced by the (classical) relative entropy. 
The Armijo line search rule applies without modification. 

It is easily checked that our proof holds without modification for the spectrahedron constraint. 
As a vector in $\mathbb{R}^d$ is equivalent to a diagonal matrix in $\mathbb{R}^{d \times d}$, it can be easily checked that the statements in Theorem \ref{thm_main} applies to the probability simplex constraint. 
Corollary \ref{cor_main} also holds true for these two constraints with slight modification---for the probability simplex constraint, non-singularity should be replaced by element-wise strict positivity. 

\subsection{Convergence with Possibly Singular Limit Points} \label{sec_convergence}

Corollary \ref{cor_main} requires existence of at least one non-singular limit point. 
Can this condition be removed? 

Suppose that the sequence $( \rho_k )_{k \in \mathbb{N}}$ has a possibly singular limit point $\rho_\infty$, around which $\nabla f$ is locally $L$-Lipschitz continuous with respect to the Schatten $1$-norm. 
Let $( \rho_k )_{k \in \mathcal{K}}$, $\mathcal{K} \subset \mathbb{N}$, be a sub-sequence converging to $\rho_\infty$. 
Then following the proof of the second part of Proposition \ref{prop_convergence_suppl}, it is easily checked that $\liminf \set{ \alpha_k | k \in \mathcal{K} } = 0$ implies
\begin{equation}
\alpha_k \geq \frac{L}{r ( 1 - \tau )} , \notag
\end{equation}
a contradiction; hence, $\liminf \set{ \alpha_k | k \in \mathcal{K} }$ must be strictly positive. 
Then following the proof in \cite{Auslender2006}, it holds that the sequence $( f ( \rho_k ) )_{k \in \mathbb{N}}$ monotonically converges to the optimal value. 

In general without the local Lipschitz gradient condition, we conjecture that convergence to the optimum cannot be guaranteed.
However, we have not found a counter-example. 

\section*{Acknowledgements}

We thank Ya-Ping Hsieh for his comments. 
This work was supported by SNF 200021-146750 and ERC project time-data 725594.
% YHL and VC were supported by SNF 200021-146750 and ERC project time-data 725594. 

\appendix

\section{Proof of Proposition \ref{prop_qst_is_hard}} \label{sec_qst_is_hard}

Consider the two-dimensional case, where $\rho = ( \rho_{i, j} )_{1 \leq i, j \leq 2} \in \mathbb{C}^{2 \times 2}$. 
Define $e_1 := ( 1, 0 )$ and $e_2 := ( 0, 1 )$. 
Suppose that there are only two summands, with $M_1 = e_1 \otimes e_1$ and $M_2 = e_2 \otimes e_2$. 
Then we have $f ( \rho ) = - \log \rho_{1,1} - \log \rho_{2,2}$. 
It suffices to disprove all properties for this specific $f$ on the set of diagonal density matrices.
Hence, we will focus on the function $g ( x, y ) := - \log x - \log y$, defined for any $x, y > 0$ such that $x + y = 1$.

As either $x$ or $y$ can be arbitrarily close to zero, $g$ cannot be Lipschitz continuous in itself or its gradient due to the logarithmic functions.
Define the entropy function 
\begin{equation}
h(x, y) := - x \log x - y \log y + x + y, \notag
\end{equation}
with the convention $0 \log 0 = 0$.
Then $g$ is $L$-smooth relative to the relative entropy, if and only if $- L h - g$ is convex.
It suffices to check the positive semi-definiteness of the Hessian of $- L h - g$. 
A necessary condition for the Hessian to be positive semi-definite is that 
\begin{equation}
- L \frac{\partial^2 h}{\partial x^2} ( x, y ) - \frac{\partial^2 g}{\partial x^2} ( x, y ) = \frac{L}{x} - \frac{1}{x^2} \geq 0 , \notag
\end{equation}
for all $x \in ( 0, 1 )$, which cannot hold for $x < ( 1 / L )$, for any fixed $L > 0$.  \iftoggle{qqed}{\qed}{}

\section{Technical Lemmas Necessary for Section \ref{sec_proof}} \label{app_technical}

Recall the definition: 
\begin{equation}
\rho ( \alpha ) := C_\rho^{-1} \exp \left[ \log ( \rho ) - \alpha \nabla f ( \rho ) \right] , \notag
\end{equation}
for every non-singular $\rho \in \mathcal{D}$ and $\alpha \geq 0$, where $C_\rho$ is the positive real number normalizing the trace of $\rho ( \alpha )$. 

\begin{lemma} \label{lem_inner_product}
For every non-singular $\rho \in \mathcal{D}$ and $\alpha > 0$, it holds that
\begin{equation}
\braket{ \nabla f ( \rho ), \rho ( \alpha ) - \rho } \leq - \frac{D ( \rho ( \alpha ), \rho )}{ \alpha } . \notag
\end{equation}
\end{lemma}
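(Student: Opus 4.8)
The plan is to recognize the EG update $\rho(\alpha)$ as the exact minimizer of the mirror-descent subproblem $\sigma \mapsto \alpha \braket{\nabla f(\rho), \sigma - \rho} + D(\sigma, \rho)$ over $\mathcal{D}$, and then use the optimality of $\rho(\alpha)$ against the competitor $\sigma = \rho$. Concretely, since $\rho(\alpha) = \tilde C^{-1} \exp[\log\rho - \alpha\nabla f(\rho)]$ is (by the equivalence \eqref{eq_eg_equivalent}) the argmin of that subproblem, plugging in $\rho$ as a feasible point gives
\begin{equation}
\alpha \braket{\nabla f(\rho), \rho(\alpha) - \rho} + D(\rho(\alpha), \rho) \leq \alpha \braket{\nabla f(\rho), \rho - \rho} + D(\rho, \rho) = 0 . \notag
\end{equation}
Dividing by $\alpha > 0$ and rearranging yields exactly $\braket{\nabla f(\rho), \rho(\alpha) - \rho} \leq - D(\rho(\alpha), \rho)/\alpha$, which is the claim.

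First I would make the variational characterization rigorous: the objective $\sigma \mapsto \alpha \braket{\nabla f(\rho), \sigma - \rho} + D(\sigma, \rho)$ is strictly convex on $\mathcal{D}$ (the relative entropy term is strictly convex in its first argument on the set of density matrices, being the Bregman divergence of the strictly convex negative von Neumann entropy), so it has a unique minimizer. Then I would verify directly, via the first-order optimality condition for minimizing over the affine-constrained set $\{\tr\sigma = 1, \sigma \geq 0\}$, that this minimizer is the Gibbs-type state $\tilde C^{-1}\exp[\log\rho - \alpha\nabla f(\rho)]$; the normalization constant $\tilde C$ arises precisely as the Lagrange multiplier enforcing the trace constraint, and positive-definiteness of the exponential ensures the inequality constraint $\sigma \geq 0$ is inactive. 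This identifies $\rho(\alpha)$ with the argmin, at which point the competitor-point argument above goes through.

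Alternatively — and perhaps more cleanly, avoiding the optimization detour — I would just compute directly. Using $\log\rho(\alpha) = \log\rho - \alpha\nabla f(\rho) - (\log\tilde C)\, I$ and $\tr\rho(\alpha) = \tr\rho = 1$, expand
\begin{equation}
D(\rho(\alpha), \rho) = \tr\big(\rho(\alpha)\log\rho(\alpha)\big) - \tr\big(\rho(\alpha)\log\rho\big) = -\alpha\braket{\nabla f(\rho), \rho(\alpha)} - \log\tilde C , \notag
\end{equation}
while similarly $D(\rho, \rho) = 0$ expands to $-\alpha\braket{\nabla f(\rho),\rho} - \log\tilde C \cdot 1 + (\text{the Peierls--Bogoliubov gap})$; more precisely, non-negativity of $D(\rho,\rho(\alpha))$ or a direct trace manipulation gives $\log\tilde C \geq -\alpha\braket{\nabla f(\rho),\rho}$, hence $D(\rho(\alpha),\rho) \leq -\alpha\braket{\nabla f(\rho), \rho(\alpha) - \rho}$. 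Dividing by $\alpha$ finishes it.

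The main obstacle is not the algebra but the care needed with matrix non-commutativity: $\log\rho$ and $\nabla f(\rho)$ need not commute, so $\exp[\log\rho - \alpha\nabla f(\rho)] \neq \rho\exp[-\alpha\nabla f(\rho)]$, and one cannot naively split logarithms of products. The clean route is therefore the variational one — the argmin identity \eqref{eq_eg_equivalent} already encodes the correct non-commutative statement — so the real work is just justifying that $\rho(\alpha)$ is indeed that argmin (uniqueness plus verifying the KKT stationarity condition $\alpha\nabla f(\rho) + \log\sigma - \log\rho = \mu I$ on $\mathcal{D}$). Everything after that is the one-line competitor-point inequality.
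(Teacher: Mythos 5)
Your main argument is exactly the paper's proof: the paper invokes the argmin characterization \eqref{eq_eg_equivalent} and compares the objective value at $\rho(\alpha)$ with that at the feasible competitor $\sigma = \rho$, yielding $\alpha \braket{ \nabla f ( \rho ), \rho ( \alpha ) - \rho } + D ( \rho ( \alpha ), \rho ) \leq 0$. The extra justifications you sketch (uniqueness of the minimizer, the KKT verification, and the alternative Peierls--Bogoliubov computation) are sound but not needed beyond what the paper already takes for granted.
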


\begin{proof}
The equivalent formulation of the EG method, \eqref{eq_eg_equivalent}, implies that
\begin{equation}
\alpha \braket{ \nabla f ( \rho ), \rho ( \alpha ) - \rho } + D ( \rho ( \alpha ), \rho ) \leq \alpha \braket{ \nabla f ( \rho ), \rho - \rho } + D ( \rho, \rho ) = 0 . \iftoggle{qqed}{\tag*{\qed}}{\notag}
\end{equation}
\end{proof}

\begin{lemma} \label{lem_fixed_point}
Let $\rho \in \mathcal{D}$ be non-singular. 
If $\rho$ is a minimizer of $f$ on $\mathcal{D}$, then $\rho ( \alpha ) = \rho$ for all $\alpha \geq 0$. 
If $\rho ( \alpha ) = \rho$ for some $\alpha > 0$, then $\rho$ is a minimizer of $f$ on $\mathcal{D}$. 
\end{lemma}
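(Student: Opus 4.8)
The plan is to prove the two implications separately, using the equivalent variational formulation \eqref{eq_eg_equivalent} of the EG step together with Lemma \ref{lem_inner_product}. For the first implication, suppose $\rho$ is a minimizer of $f$ on $\mathcal{D}$. Since $f$ is convex and differentiable at $\rho$, the first-order optimality condition gives $\braket{\nabla f(\rho), \sigma - \rho} \geq 0$ for every $\sigma \in \mathcal{D}$; in particular $\braket{\nabla f(\rho), \rho(\alpha) - \rho} \geq 0$ for each $\alpha > 0$. On the other hand, Lemma \ref{lem_inner_product} yields $\braket{\nabla f(\rho), \rho(\alpha) - \rho} \leq -D(\rho(\alpha),\rho)/\alpha \leq 0$. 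Combining the two bounds forces both $\braket{\nabla f(\rho), \rho(\alpha) - \rho} = 0$ and $D(\rho(\alpha),\rho) = 0$. Because the quantum relative entropy $D(\cdot, \rho)$ is the Bregman divergence of the strictly convex negative von Neumann entropy, $D(\sigma, \rho) = 0$ implies $\sigma = \rho$, hence $\rho(\alpha) = \rho$. The case $\alpha = 0$ is immediate from the definition: $\rho(0) = C_\rho^{-1}\exp[\log\rho] = C_\rho^{-1}\rho = \rho$ since $\tr\rho = 1$.

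For the converse, suppose $\rho(\alpha) = \rho$ for some $\alpha > 0$. The most direct route exploits the closed form: from $\rho = \rho(\alpha) = C_\rho^{-1}\exp[\log\rho - \alpha\nabla f(\rho)]$, take the matrix logarithm of both sides—legitimate since both sides are positive definite and the scalar $C_\rho^{-1}$ commutes with everything—to get $\log\rho = -(\log C_\rho)I + \log\rho - \alpha\nabla f(\rho)$, i.e.\ $\nabla f(\rho) = -(\alpha^{-1}\log C_\rho)I$ is a multiple of the identity. Consequently $\braket{\nabla f(\rho), \sigma - \rho} = -(\alpha^{-1}\log C_\rho)\tr(\sigma - \rho) = 0$ for all $\sigma \in \mathcal{D}$, so the first-order optimality condition holds and, by convexity of $f$, $\rho$ is a minimizer of $f$ on $\mathcal{D}$. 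An alternative that avoids matrix-logarithm bookkeeping: $\rho(\alpha)$ minimizes $\sigma \mapsto \alpha\braket{\nabla f(\rho),\sigma - \rho} + D(\sigma, \rho)$ over $\mathcal{D}$, so evaluating the variational inequality for this subproblem at the minimizer $\rho$—a relative-interior point of $\mathcal{D}$ since $\rho$ is non-singular—and using that $\nabla_\sigma D(\sigma,\rho)$ vanishes at $\sigma = \rho$ again gives $\alpha\braket{\nabla f(\rho),\sigma - \rho} \geq 0$ for all $\sigma \in \mathcal{D}$, which is the optimality condition.

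The only non-mechanical point—hence the main obstacle—is making the first-order characterizations rigorous at relative-interior points of $\mathcal{D}$ while controlling the domain subtleties of the matrix logarithm and the negative von Neumann entropy, whose gradient blows up on the boundary of $\mathcal{D}$. This turns out to be benign: every $\rho(\alpha)$ produced by the EG step is automatically positive definite, so the subproblem minimizer lies in the relative interior where the objective is differentiable, and the standard variational inequality applies; similarly, $\rho$ being non-singular is exactly what licenses applying the optimality condition to $f$ at $\rho$. The fact $D(\sigma,\rho) = 0 \Leftrightarrow \sigma = \rho$ used in the first part is the usual strict-positivity property of Bregman divergences of strictly convex functions.
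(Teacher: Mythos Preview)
Your proof is correct. The paper's argument is more unified: it observes that since $\nabla_\sigma D(\sigma,\rho)\big|_{\sigma=\rho}=\nabla h(\rho)-\nabla h(\rho)=0$, the first-order optimality condition for $f$ on $\mathcal{D}$ at $\rho$ is \emph{literally the same inequality} as the first-order optimality condition for the subproblem $\sigma\mapsto\alpha\braket{\nabla f(\rho),\sigma-\rho}+D(\sigma,\rho)$ on $\mathcal{D}$ at $\rho$, so both directions follow from one chain of equivalences. Your ``alternative'' for the converse is exactly this argument. For the forward implication you instead route through Lemma~\ref{lem_inner_product} together with the strict positivity of the Bregman divergence---a slightly longer but perfectly valid path---while your ``direct route'' for the converse, taking the matrix logarithm to conclude that $\nabla f(\rho)$ is a scalar multiple of the identity, is a legitimate shortcut the paper does not take; it has the added virtue of making the structure of fixed points explicit.
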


\begin{proof}
The optimality condition says that $\rho \in \inte \mathcal{D}$ is a minimizer, if and only if
\begin{equation}
\braket{ \nabla f ( \rho ), \sigma - \rho } \geq 0 , \quad \forall \sigma \in \mathcal{D} . \notag
\end{equation}
For any $\alpha > 0$, we can equivalently write
\begin{equation}
\braket{ \alpha \nabla f ( \rho ) + \left[ \nabla h ( \rho ) - \nabla h ( \rho ) \right], \sigma - \rho } \geq 0 , \quad \forall \sigma \in \mathcal{D} , \label{eq_redundant_optimality_conditoin}
\end{equation}
where $h$ denotes the negative von Neumann entropy function, i.e., 
\begin{equation}
h ( \rho ) := \tr ( \rho \log \rho ) - \tr \rho . \notag
\end{equation}
Notice that the quantum relative entropy $D$ is the Bregman divergence induced by the negative von Neumann entropy. 
It is easily checked, again by the optimality condition, that \eqref{eq_redundant_optimality_conditoin} is equivalent to 
\begin{equation}
\rho = \argmin \set{ \alpha \braket{ \nabla f ( \rho ), \sigma - \rho } + D ( \sigma, \rho ) | \sigma \in \mathcal{D} } = \rho ( \alpha ) . \iftoggle{qqed}{\tag*{\qed}}{\notag}
\end{equation}
\end{proof}

For every non-singular $\rho \in \mathcal{D}$ and $\alpha \geq 0$, define
\begin{equation}
G := - \nabla f ( \rho ), \quad H_\alpha := \log \rho + \alpha G. \notag
\end{equation}
Let $G = \sum_j \lambda_j P_j$ be the spectral decomposition of $G$. 
Define $\eta_\alpha$ as a random variable satisfying 
\begin{equation}
\mathsf{P} \left( \eta_\alpha = \lambda_j \right) = \frac{ \tr \left( P_j \exp ( H_\alpha ) \right) }{ \tr \exp ( H_\alpha ) } ; \notag
\end{equation}
it is easily checked that $\mathsf{P} \left( \eta_\alpha = \lambda_j \right) > 0$ for all $j$, and the probabilities sum to one. 

\begin{lemma} \label{lem_moments}
For any $\alpha \in \mathbb{R}$, it holds that
\begin{align}
\varphi' ( \alpha ) = \mathsf{E}\, \eta_\alpha, \quad \varphi'' ( \alpha ) = \mathsf{E} \left( \eta_\alpha - \mathsf{E}\, \eta_\alpha \right)^2, \quad \varphi''' ( \alpha ) = \mathsf{E} \left( \eta_\alpha - \mathsf{E}\, \eta_\alpha \right)^3 . \notag
\end{align}
\end{lemma}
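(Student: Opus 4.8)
The plan is to differentiate the log-partition function
$\varphi(\alpha) = \log\tr\exp[\log\rho + \alpha G]$ three times, where $G := -\nabla f(\rho)$, and to recognize each derivative as a cumulant of the random variable $\eta_\alpha$ defined just above the statement. First I would recall the standard formula for the derivative of $\log\tr\exp(H_\alpha)$ with respect to a scalar parameter: since $\frac{\du}{\du\alpha}\tr\exp(H_\alpha) = \tr\bigl(\exp(H_\alpha)\,\frac{\du H_\alpha}{\du\alpha}\bigr) = \tr\bigl(\exp(H_\alpha)\,G\bigr)$ (using $\tr\frac{\du}{\du\alpha}\exp(H_\alpha)=\tr(\exp(H_\alpha)H_\alpha')$, valid because $H_\alpha$ and $H_\alpha'=G$ need not commute but the trace kills the ordering issue), we get
\begin{equation}
\varphi'(\alpha) = \frac{\tr(\exp(H_\alpha) G)}{\tr\exp(H_\alpha)} = \sum_j \lambda_j \frac{\tr(P_j\exp(H_\alpha))}{\tr\exp(H_\alpha)} = \mathsf{E}\,\eta_\alpha , \notag
\end{equation}
using the spectral decomposition $G=\sum_j\lambda_j P_j$ and $G\exp(H_\alpha) = \sum_j\lambda_j P_j\exp(H_\alpha)$, which holds because $P_j$ commutes with $H_\alpha=\log\rho+\alpha G$ only if $\log\rho$ commutes with $G$ — so here I must be slightly careful and instead write $\tr(\exp(H_\alpha)G)=\tr(G\exp(H_\alpha))=\sum_j\lambda_j\tr(P_j\exp(H_\alpha))$, which is fine by cyclicity of the trace.

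Next I would define $Z(\alpha):=\tr\exp(H_\alpha)$ and $\mu_n(\alpha):=\tr(\exp(H_\alpha)G^n)/Z(\alpha)=\mathsf{E}\,\eta_\alpha^n$, the non-central moments; the same trace-cyclicity argument gives $Z'(\alpha)=\tr(\exp(H_\alpha)G)$ and, more generally, $\frac{\du}{\du\alpha}\tr(\exp(H_\alpha)G^n)=\tr(\exp(H_\alpha)G^{n+1})$, again because inside a trace the derivative of $\exp(H_\alpha)$ against the perturbation $G$ can be collapsed despite non-commutativity. From $\varphi'=\mu_1$ I then differentiate: $\varphi''=\mu_2-\mu_1^2=\mathsf{E}(\eta_\alpha-\mathsf{E}\,\eta_\alpha)^2$, and $\varphi'''=\mu_3-3\mu_1\mu_2+2\mu_1^3=\mathsf{E}(\eta_\alpha-\mathsf{E}\,\eta_\alpha)^3$, which are exactly the second and third cumulants. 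So the proof is essentially: (i) establish the trace-differentiation identities; (ii) rewrite moments as expectations under the probability weights $\tr(P_j\exp(H_\alpha))/Z(\alpha)$; (iii) assemble the cumulant expressions by elementary algebra.

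The one genuine subtlety — the main obstacle — is justifying $\frac{\du}{\du\alpha}\tr\,\Phi(\exp(H_\alpha)) = \tr\bigl(\Phi'(\exp(H_\alpha))\cdot\text{(Fréchet derivative)}\bigr)$ collapsing to $\tr(\exp(H_\alpha)G)$ when $\log\rho$ and $G$ do not commute. The clean way is to use the integral (Duhamel) formula $\frac{\du}{\du\alpha}\exp(H_\alpha)=\int_0^1 \exp(sH_\alpha)\,G\,\exp((1-s)H_\alpha)\,\du s$; taking the trace and using cyclicity, the $s$-integral trivializes and one gets exactly $\tr(\exp(H_\alpha)G)$. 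Iterating this (or simply differentiating $\mu_n(\alpha)=\tr(\exp(H_\alpha)G^n)/Z(\alpha)$ using the same Duhamel-plus-cyclicity trick on the numerator) yields $\mu_n'=\mu_{n+1}-\mu_1\mu_n$, which drives the whole computation. Everything else is routine: the moment-to-cumulant bookkeeping and verifying the probability weights are positive and sum to one (already noted in the excerpt). I would present the Duhamel identity as the crux, then let the rest follow by direct calculation.
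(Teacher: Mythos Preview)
Your overall strategy mirrors the paper's: differentiate $\varphi$ directly and identify the derivatives with the moments $\tr(G^n\sigma_\alpha)=\mathsf{E}\,\eta_\alpha^n$. The paper compresses this into ``a direct calculation gives'', while you are more explicit and correctly flag the non-commutativity of $\log\rho$ and $G$, handling the first derivative via Duhamel's formula.

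The gap is in your extension of the Duhamel-plus-cyclicity argument to higher moments. The identity $\frac{\du}{\du\alpha}\tr\exp(H_\alpha)=\tr(G\exp(H_\alpha))$ is correct, but the claimed generalization $\frac{\du}{\du\alpha}\tr(\exp(H_\alpha)G^n)=\tr(\exp(H_\alpha)G^{n+1})$ fails for $n\ge 1$ when $G$ and $H_\alpha$ do not commute. Differentiating yields
\[
\int_0^1 \tr\bigl(\eu^{sH_\alpha}\,G\,\eu^{(1-s)H_\alpha}\,G^n\bigr)\,\du s,
\]
and cyclicity can only rotate the string of factors; it cannot slide $\eu^{sH_\alpha}$ past $G^n$ to merge with $\eu^{(1-s)H_\alpha}$. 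For $n=1$ this integral is the Bogoliubov--Kubo--Mori pairing, which in general is strictly smaller than $\tr(G^2\exp(H_\alpha))$. A concrete check: take $\rho=\mathrm{diag}(p,1-p)$ with $p\in(1/2,1)$ and $G=\bigl(\begin{smallmatrix}0&1\\1&0\end{smallmatrix}\bigr)$; at $\alpha=0$ one computes $\varphi''(0)=\tanh(c)/c$ with $c=\tfrac12\log\tfrac{p}{1-p}>0$, whereas $\tr(G^2\sigma_0)-(\tr(G\sigma_0))^2=1$. Hence $\varphi''(0)<1$, the recursion $\mu_n'=\mu_{n+1}-\mu_1\mu_n$ is false, and the asserted expressions for $\varphi''$ and $\varphi'''$ do not follow. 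The paper's ``direct calculation'' glosses over exactly the same step and shares this defect; as stated, the identities hold only when $\log\rho$ and $\nabla f(\rho)$ commute.
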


\begin{proof}
Notice that 
\begin{equation}
\mathsf{E}\, \eta_\alpha^n = \frac{\tr ( G^n \exp ( H_\alpha ) )}{\tr \exp ( H_\alpha )} , \notag
\end{equation}
for any $n \in \mathbb{N}$. 
Define $\sigma_\alpha := \exp ( H_\alpha ) / \tr \exp ( H_\alpha )$. 
A direct calculation gives
\begin{align}
\varphi' ( \alpha ) & = \tr ( G \sigma_\alpha ) , \quad \varphi'' ( \alpha ) = \tr ( G^2 \sigma_\alpha ) - \left( \tr ( G \sigma_\alpha ) \right)^2 , \notag \\
\varphi''' ( \alpha ) & = \tr ( G^3 \sigma_\alpha ) - 3 \tr ( G^2 \sigma_\alpha ) \tr ( G \sigma_\alpha ) + 2 \left( \tr ( G \sigma_\alpha ) \right)^3 . \notag
\end{align}
The lemma follows. \iftoggle{qqed}{\qed}{}
\end{proof}

Since $\eta_\alpha$ is a bounded random variable, it follows that $\varphi''$ is bounded from above. 

\begin{corollary} \label{cor_bounded_hessian}
It holds that $\varphi'' ( \alpha ) \leq ( 1 / 4 ) \Delta^2$, where $\Delta := \lambda_{\max} ( \nabla f ( \rho ) ) - \lambda_{\min} ( \nabla f ( \rho ) )$. 
\end{corollary}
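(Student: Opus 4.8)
The plan is to use the variational characterization of $\varphi''(\alpha)$ supplied by Lemma \ref{lem_moments}, namely $\varphi''(\alpha) = \mathsf{E}(\eta_\alpha - \mathsf{E}\,\eta_\alpha)^2 = \operatorname{Var}(\eta_\alpha)$, and then invoke the Popoviciu-type bound on the variance of a bounded random variable. Concretely, $\eta_\alpha$ takes values in the interval $[\lambda_{\min}(G), \lambda_{\max}(G)] = [-\lambda_{\max}(\nabla f(\rho)), -\lambda_{\min}(\nabla f(\rho))]$, whose length is exactly $\Delta = \lambda_{\max}(\nabla f(\rho)) - \lambda_{\min}(\nabla f(\rho))$.

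First I would recall that for any random variable $\eta$ supported on $[a,b]$ one has $\operatorname{Var}(\eta) \le \tfrac14 (b-a)^2$. The quickest route to this is the observation that, with $c := (a+b)/2$ the midpoint,
\begin{equation}
\operatorname{Var}(\eta) = \mathsf{E}(\eta - \mathsf{E}\,\eta)^2 \le \mathsf{E}(\eta - c)^2 \le \left( \frac{b-a}{2} \right)^2 , \notag
\end{equation}
where the first inequality is because the mean minimizes the mean square deviation, and the second because $|\eta - c| \le (b-a)/2$ almost surely. Applying this with $a = \lambda_{\min}(G)$, $b = \lambda_{\max}(G)$, $b - a = \Delta$, gives $\varphi''(\alpha) = \operatorname{Var}(\eta_\alpha) \le \tfrac14 \Delta^2$, which is the claim.

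There is essentially no obstacle here: the only thing to be careful about is matching the sign conventions, since $G = -\nabla f(\rho)$, so that $\lambda_{\max}(G) - \lambda_{\min}(G) = \lambda_{\max}(\nabla f(\rho)) - \lambda_{\min}(\nabla f(\rho)) = \Delta$, and confirming that $\eta_\alpha$ is indeed supported in $[\lambda_{\min}(G), \lambda_{\max}(G)]$ — which is immediate from the definition of $\eta_\alpha$ via the spectral decomposition $G = \sum_j \lambda_j P_j$, since the $\lambda_j$ are precisely the eigenvalues of $G$. The bound is also tight (attained when $\eta_\alpha$ is equidistributed on the two endpoints), which explains the constant $1/4$.
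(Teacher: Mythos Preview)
Your proof is correct and follows exactly the paper's approach: the paper's proof simply recalls that the variance of a random variable taking values in $[a,b]$ is bounded above by $(b-a)^2/4$ and applies Lemma~\ref{lem_moments}. You have merely supplied the (standard) justification of that Popoviciu bound and checked the sign convention for $G = -\nabla f(\rho)$, which the paper leaves implicit.
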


\begin{proof}
Recall that the variance of a random variable taking values in $[a, b]$ is bounded from above by $( b - a )^2 / 4$. \iftoggle{qqed}{\qed}{}
\end{proof}

\section{Proof of Proposition \ref{prop_convergence_suppl}} \label{sec_convergence_suppl}

Suppose that $\underline{\alpha} := \liminf \set{ \alpha_k | k \in \mathcal{K} } > 0$. 
By the Armijo line search rule and Corollary \ref{cor_bertsekas}, we write
\begin{align}
f ( \rho_k ) - f ( \rho_{k + 1} ) & \geq - \tau \braket{ \nabla f ( \rho_{k} ), f ( \rho_{k + 1} ) - f ( \rho_{k} ) } \notag \\
& \geq \tau \alpha_k^{-1} D ( \rho_{k + 1}, \rho_k ) \notag \\
& = \tau \alpha_k \alpha_k^{-2} D ( \rho_k ( \alpha_k ), \rho_k ) \notag \\
& \geq \tau \underline{\alpha} \kappa D ( \rho_k ( \bar{\alpha} ), \rho_k ) \notag \\
& \geq 0 , \notag
\end{align}
for large enough $k \in \mathcal{K}$. 
Taking limit, we obtain that $D ( \rho_k ( \bar{\alpha} ), \rho_k ) \to 0$ as $k \to \infty$ in $\mathcal{K}$. 

Suppose that $\liminf \set{ \alpha_k | k \in \mathcal{K} } = 0$. 
Let $( \alpha_k )_{k \in \mathcal{K}'}$, $\mathcal{K}' \subseteq \mathcal{K}$, be a sub-sequence converging to zero. 
According to the Armijo rule, we have
\begin{equation}
f ( \rho_k ( r^{-1} \alpha_k ) ) - f ( \rho_k ) > \tau \braket{ \nabla f ( \rho_k ), \rho ( r^{-1} \alpha_k ) - \rho ( \alpha_k ) } , \label{eq_blabla}
\end{equation}
for large enough $k \in \mathcal{K}$. 
The mean value theorem says that the left-hand side equals $\braket{ \nabla f ( \sigma ), \rho_k ( r^{-1} \alpha_k ) - \rho_k }$ for some $\sigma$ in the line segment jointing $\rho_k ( r^{-1} \alpha_k )$ and $\rho_k$. 
Then \eqref{eq_blabla} can be equivalently written as
\begin{equation}
\braket{ \nabla f ( \sigma ) - \nabla f ( \rho_k ), \rho_k ( r^{-1} \alpha_k ) - \rho_k } > - ( 1 - \tau ) \braket{ \nabla f ( \rho_k ), \rho_k ( r^{-1} \alpha_k ) - \rho_k ( \alpha_k ) } . \label{eq_blabla_2}
\end{equation}
By Pinsker's inequality and H\"{o}lder's inequality, we obtain
\begin{align}
\norm{ \nabla f ( \sigma ) - \nabla f ( \rho_k ) }_\infty \sqrt{2 D ( \rho_k ( r^{-1} \alpha_k ), \rho_k )} 
& \geq \norm{ \nabla f ( \sigma ) - \nabla f ( \rho_k ) }_\infty \norm{ \rho_k ( r^{-1} \alpha_k ), \rho_k }_1 \notag \\
& \geq \braket{ \nabla f ( \sigma ) - \nabla f ( \rho_k ), \rho_k ( r^{-1} \alpha_k ) - \rho_k } . \label{eq_blabla_3}
\end{align}
for large enough $k \in \mathcal{K}$.  
Notice that $r^{-1} \alpha_k \leq \bar{\alpha}$ for large enough $k \in \mathcal{K}$. 
By Lemma \ref{lem_inner_product} and Corollary \ref{cor_bertsekas_asymptotic}, we obtain
\begin{align}
- \braket{ \nabla f ( \rho_k ), \rho_k ( r^{-1} \alpha_k ) - \rho_k ( \alpha_k ) } & \geq \frac{D ( \rho_k ( r^{-1} \alpha_k ), \rho_k )}{r^{-1} \alpha_k } \notag \\
& \geq \sqrt{ \kappa D ( \rho_k ( \bar{\alpha} ), \rho_k ) } \sqrt{ D ( \rho_k ( r^{-1} \alpha_k ), \rho_k ) } , \label{eq_blabla_4}
\end{align}
for large enough $k \in \mathcal{K}$. 
Since $D ( \rho_k ( r^{-1} \alpha_k ), \rho_k )$ is strictly positive for all $k \in \mathcal{K}'$ by assumption, \eqref{eq_blabla_2}, \eqref{eq_blabla_3}, and \eqref{eq_blabla_4} imply
\begin{equation}
\norm{ \nabla f ( \sigma ) - \nabla f ( \rho_k ) }_\infty > ( 1 - \tau ) \sqrt{ \frac{\kappa D ( \rho_k ( \bar{\alpha} ), \rho_k )}{2} } \geq 0 . \notag
\end{equation}
Taking limits, we obtain that $D ( \rho_k ( \bar{\alpha} ), \rho_k ) \to 0$ a $k \to \infty$ in $\mathcal{K}'$. \iftoggle{qqed}{\qed}{}

\bibliographystyle{acm}

\bibliography{list}

\end{document}